\DeclareMathOperator{\id}{id}
\newcommand{\res}{\mathop{\hbox{\vrule height 7pt width .5pt depth 0pt
\vrule height .5pt width 6pt depth 0pt}}\nolimits}
\renewcommand{\div}{{\rm div}\,}
\newcommand{\R}{\mathbb{R}}
\newcommand{\N}{\mathbb{N}}
\newcommand{\M}{\mathcal{M}}
\newcommand{\A}{\mathcal{A}}
\newcommand{\Na}{\mathcal{N}}
\newtheorem*{thmnonumber}{Theorem}
\newtheorem{thm}{Theorem}[section]
\newtheorem{pro}[thm]{Problem}
\newtheorem{defi}[thm]{Definition}
\newtheorem{prop}[thm]{Proposition}
\newtheorem{lemma}[thm]{Lemma}
\newtheorem{rmk}[thm]{Remark}
\begin{document}
\title{Sparsity of solutions for variational inverse problems with
  finite-dimensional data} \author{Kristian Bredies\footnote{Kristian Bredies, Institute of Mathematics and Scientific Computing,
    University of Graz, Heinrichstra\ss{}e 36, A-8010 Graz,
    Austria. Email: \texttt{kristian.bredies@uni-graz.at}} \and Marcello
  Carioni\footnote{Marcello Carioni, Institute of Mathematics and Scientific Computing,
    University of Graz, Heinrichstra\ss{}e 36, A-8010 Graz,
    Austria. Email: \texttt{marcello.carioni@uni-graz.at}\newline \indent The Institute of Mathematics and Scientific Computing is a member of NAWI Graz (\url{www.nawigraz.at}).}}
  
\maketitle
\begin{abstract}
\sloppy
In this paper we characterize sparse solutions for variational problems of the form $\min_{u\in X} \phi(u) + F(\A u)$, where $X$ is a locally convex space, $\A$ is a linear continuous operator that maps into a finite dimensional Hilbert space and $\phi$ is a seminorm.  More precisely, we prove that there exists a minimizer that is ``sparse'' in the sense that it is represented as a linear combination of the extremal points of the unit ball associated with the regularizer $\phi$ (possibly translated by an element in the null space of $\phi$). We apply this result to relevant regularizers such as the total variation seminorm and the Radon norm of a scalar linear differential operator. In the first example, we provide a theoretical justification of the so-called staircase effect and in the second one, we recover the result in \cite{unsersplines} under weaker hypotheses.
\end{abstract}

\section{Introduction}

One of the fundamental tasks of inverse problems is to reconstruct data from a small number of usually noisy observations. This is of capital importance in a huge variety of fields in science and engineering, where typically one has access only to a fixed and small number of measurements of the sought unknown.
However, in general, this type of problem is underdetermined and therefore, the recovery of the true data is practically impossible. One common way to obtain a well-posed problem is to make \emph{a priori} assumptions on the unknown and, more precisely, to require that the latter is \emph{sparse} in a certain sense.
In this case, the initial data can often be recovered by solving a minimization problem with a suitable regularizer of the form
\begin{equation}\label{inizio}
\inf_{u \in X} \phi(u) \qquad \mbox{ subjected to }\  \A u = y\,,
\end{equation}
where $\phi$ is the regularizer, $\A: X \rightarrow H$, $H$ finite-dimensional Hilbert space, models the finite number of observations (that is small compared to the dimension of $X$) and $y \in H$ is noise-free data.

When the domain $X$ is finite-dimensional and the regularizer is the $\ell_1$ norm, the problem falls into the established theory of \emph{compressed sensing} \cite{compressedsensing, donohocompressed} that has seen a huge development in recent years. In this case, sparsity is intended as a \emph{high number of zero coefficients} with respect to a certain basis of $X$.

In an infinite dimensional setting, when the domain $X$ is usually a Banach space, there has been a clear evidence that the action of the regularizers is promoting different notions of sparsity, but there have not been a comprehensive theory explaining this effect.

Nevertheless, the effect of \emph{sparsity} plays a crucial role in the field of image processing and computer vision: in many cases, the recovered image in a variational model can be interpreted as \emph{sparse}  with respect to %
a notion of \emph{sparsity} that is depending on the regularizer. For example, for classical total variation (TV) denoising \cite{tvdenoising}
\begin{equation}\label{tvintr}
\inf_{u\in BV(\Omega)} TV(u) + \frac{\lambda}2\|u - g\|_{L^2(\Omega)}^2 \,,
\end{equation} 
it has been observed that minimizers are characterized by the so called staircase effect (see for example \cite{staircase, esedoglusparse, Nikolova}) which corresponds to the gradient of the considered image having small support.
Another classical example of sparsity-promoting regularizers is $\ell^1$-penalization. In \cite{sparsetik} the authors study the $\ell^p$ regularizer with $1 \leq p \leq 2$ in Hilbert spaces and they note that the case $p=1$ promotes sparsity with respect to a given basis of the Hilbert space which means that only a finite number of coefficients in the respective basis representation is non-zero. In \cite{resmerita}, $\ell^1$-regularization is used %
in the framework of the \emph{least error method} to recover a sparse solution with a fixed bound on the number of non-zero coefficients.
Finally, it has been noted that suitable $\ell^1$-type regularizers enforce sparsity when data are represented in a \emph{wavelet basis} (see for example \cite{antoniadis, donoho}).

The intrinsic sparsity of infinite-dimensional variational models with finite-dimensional data has been investigated by various authors in specific cases and in different contexts. One of the most important instances can be found in \cite{chandra}: here, the authors 
notice that the regularizer is linked to the convex hull of the set of sparse vectors that we aim to recover. 
This was also noticed in \emph{optimal control} theory (see, for example, \cite{casaskunisch}) and used in practice for developing efficient algorithms to solve optimization problems that are based on the sparsity of the minimizers \cite{bredieslorenz, pikka, walter}.

\medskip
More recently, several authors have investigated deeply the connection between regularizers and \emph{sparsity}. In 2016, Unser, Fageot and Ward in \cite{unsersplines} have studied the case where $\phi(u) = \|Lu\|_{\mathcal{M}}$, $L$ is a \emph{scalar} linear differential  operator and $\|\cdot\|_\M$ denotes the Radon norm. They showed the existence of a \emph{sparse} solution, namely a linear combination of counterimages of Dirac deltas which can be expressed using a fundamental solution of $L$. Also, the work of Flinth and Weiss \cite{exactsolutionsflinth} is worth mentioning, where they give an alternative proof of the result in \cite{unsersplines} with less restrictive hypotheses. In both works, however, the case of a vector-valued differential operator was not treated  and therefore, problems involving the total variation regularizer were not covered. 
After this manuscript was finalized, we discovered a recent preprint \cite{chambollerepresenter} where the authors study a similar abstract problem and apply it, in particular, to the TV regularizer in order to justify the staircase effect. We remark that \cite{chambollerepresenter} and the present paper were developed independently and differ in terms of the proofs as well as the applications.

\medskip
In this paper, we provide a theory that characterizes \emph{sparsity} for minimizers of general linear inverse problems with finite-dimensional data constraints. More precisely, we choose to work with locally convex spaces in order to deal, in particular, with weak* topologies. The latter is necessary in order to treat variational problems with TV regularization or Radon-norm regularization.
We consider the following problem:
\begin{equation}\label{prointro}
\inf_{u\in X} \phi(u) + F(\A u)\,,
\end{equation}
where $X$ is a locally convex space, $\phi : X \rightarrow [0,+\infty]$ is a lower semi-continuous seminorm, $\A : X \rightarrow H$ is a linear continuous map with values in a finite-dimensional Hilbert space $H$ and $F$ is a proper, convex, lower semi-continuous functional. (Notice that this generality allows problems of the type \eqref{inizio} for noise-free data as well as soft constraints in case of noisy data.)
Additionally we ask that $\A(\mbox{dom}\, \phi) = H$ (see Assumption \ref{H0} below) and that $\phi$ is coercive when restricted to the quotient space of $X$ with the null-space of $\phi$ that we denote by $\mathcal{N}$ (see Assumption \ref{H1} below).
Under these hypotheses we prove that there exists a \emph{sparse} minimizer of \eqref{prointro}, namely a minimizer that can be written as a linear combination of extremal points of the unit ball associated to $\phi$ (in the quotient space $X/\mathcal{N}$). More precisely, we obtain the following result:

\begin{thmnonumber}[Theorem~\ref{maint}]
Under the previous hypotheses there exists $\overline u \in X$, a minimizer of \eqref{prointro} such that:
\begin{equation}
\overline{u} = \overline \psi + \sum_{i=1}^p \gamma_i u_i \, ,
\end{equation}
where $\overline \psi \in \mathcal{N}$, $p\leq \mbox{dim}\,(H/\mathcal{A}(\mathcal{N}))$,  $\gamma_i >0$ with $\sum_{i=1}^p \gamma_i = \phi(\overline u)$ and 
\begin{equation*}
u_i + \mathcal{N} \in \mbox{Ext}\left(\left\{u + \mathcal{N} \in X / \mathcal{N} : \phi(u) \leq 1\right\}\right)\,.
\end{equation*}
\end{thmnonumber}

Notice that our result completely characterizes the sparse solution $\overline{u}$ of \eqref{prointro} and relates the notion of \emph{sparsity} with structural properties of the regularizer $\phi$. Moreover, our hypotheses are minimal for having a well-posed variational problem \eqref{prointro}. 

The strategy to prove the previous theorem relies on the application of Krein--Milman's theorem and Carathéodory's theorem in the quotient space of $\mathcal{A}(X)$ that allows to represent any element in the image by $\A$ of the unit ball of the regularizer as a convex combination of the extremal points (see Theorem \ref{maint}). In order to prove minimality for the element having the desired representation, we derive optimality conditions for Problem \eqref{prointro} (Proposition \ref{opt}). For this purpose, we need to prove a \emph{no gap} property in the quotient space between primal and dual problem. In locally convex vector spaces this is not straightforward and requires the notion of Mackey topology \cite{Schaefer}.

\medskip
In the second part of our paper we apply the main result to specific examples of popular regularizers. First of all we recover the well-known result (see for example \cite{shapiro}) that by minimizing the Radon norm of a measure under finite-dimensional data constraints, one recovers a minimizer that is made of delta peaks. Indeed, according to our theory which applies when the space of Radon measures $\M(\Omega)$ is equipped with the weak* topology, Dirac deltas are extremal points of the unit ball associated with the Radon norm of a measure and our result applies straightforwardly (see Section \ref{radonnormmeas}). 

Then, we consider the TV regularizer for BV functions in bounded domains. Also in this case, our result applies when $BV(\Omega)$ is equipped with the weak* topology. This justifies the usage of locally convex spaces in the general theory. In order to confirm the heuristic observation that \emph{sparse} minimizers show a peculiar staircase effect, we characterize the extremal points of the unit ball associated to the TV norm (in the quotient space $BV(\Omega)/\R$). In particular, we extend a result of \cite{Ambrosiocasellesconnected} and \cite{Flemingext} to the case where $\Omega$ is a bounded domain. In order to achieve that, we need an alternative notion of simple sets of finite perimeter (see Definition \ref{simpleset}). We prove the following theorem:
\begin{thmnonumber}[Theorem~\ref{sparsity_tv}]
If $X = BV(\Omega)$ and $\phi(u) = |Du|(\Omega)$ there exists a minimizer $\overline u \in BV(\Omega)$ of \eqref{prointro} such that
\begin{equation}\label{nullintr}
\overline{u} = c + \sum_{i=1}^p \frac{\gamma_i}{P(E_i,\Omega)} \chi_{E_i} \, ,
\end{equation}
where $c \in \R$, $p\leq \mbox{dim}\,(H/\mathcal{A}(\R))$,  $\gamma_i >0$ with $\sum_i \gamma_i = |D\overline u|(\Omega)$ and $E_i \subset \Omega$ are simple sets with finite perimeter $P(E_i,\Omega)$ in $\Omega$.
\end{thmnonumber}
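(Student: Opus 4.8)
The plan is to obtain Theorem~\ref{sparsity_tv} as a concrete instance of the abstract representation result Theorem~\ref{maint}, applied to $X = BV(\Omega)$ endowed with its weak* topology, $\phi(u) = |Du|(\Omega)$, and the given $\A$, $F$. First I would verify the standing hypotheses. The space $BV(\Omega)$ with the weak* topology is a locally convex space, $u \mapsto |Du|(\Omega)$ is a seminorm that is lower semicontinuous for weak* convergence, and, since $\Omega$ is a (connected) bounded domain, its null space is exactly $\mathcal{N} = \R$. Hypothesis [\textbf{H0}], i.e.\ $\A(\mathrm{dom}\,\phi) = \A(BV(\Omega)) = H$, is the standing assumption. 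For [\textbf{H1}] — coercivity of $\phi$ on $BV(\Omega)/\R$ — I would invoke the Poincar\'e--Wirtinger inequality $\|u - \tfrac{1}{|\Omega|}\int_\Omega u\|_{L^1(\Omega)} \le C\,|Du|(\Omega)$ valid on $\Omega$, together with the compactness of $\|\cdot\|_{BV}$-bounded sets in the weak* topology (and their weak* closedness by lower semicontinuity): this shows that $\{u + \R : |Du|(\Omega) \le 1\}$ is compact in $X/\R$, which is precisely the required coercivity. Theorem~\ref{maint} then yields a minimizer $\overline u = c + \sum_{i=1}^p \gamma_i u_i$ with $c \in \R$, $p \le \dim(H/\A(\R))$, $\gamma_i > 0$, $\sum_i \gamma_i = |D\overline u|(\Omega)$, and $u_i + \R \in \mathrm{Ext}(\{u + \R : |Du|(\Omega) \le 1\})$.

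It remains to identify these extremal points, which is the core of the argument. I would prove
\[
\mathrm{Ext}\!\left(\left\{u + \R \in BV(\Omega)/\R : |Du|(\Omega) \le 1\right\}\right) = \left\{\tfrac{1}{P(E,\Omega)}\chi_E + \R : E \subset \Omega \text{ a simple set of finite perimeter in } \Omega\right\},
\]
which substituted into the previous step gives exactly the claimed form of $\overline u$. For the inclusion ``$\supseteq$'': given a simple set $E$, if $\tfrac{1}{P(E,\Omega)}\chi_E + \R = \tfrac12(v_1 + v_2) + \R$ with $v_1, v_2$ in the unit ball, I would apply the coarea formula to $v_1, v_2$ and exploit the rigidity in $|D(v_1+v_2)|(\Omega) = |Dv_1|(\Omega) + |Dv_2|(\Omega)$ to force, for a.e.\ level $t$, the super-level sets $\{v_j > t\}$ to be equal (up to negligible sets) to $\emptyset$, $E$, or $\Omega$; here the notion of simple set (Definition~\ref{simpleset}) — indecomposability together with the appropriate ``saturation'' with respect to $\Omega$ — is exactly what prevents a competing component from splitting off, yielding $v_1 \equiv v_2 \equiv \tfrac{1}{P(E,\Omega)}\chi_E$ modulo constants. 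For the inclusion ``$\subseteq$'': given an extremal $u + \R$, necessarily $|Du|(\Omega) = 1$, and the coarea formula $|Du|(\Omega) = \int_\R P(\{u>t\},\Omega)\,dt$ lets me write $u + \R$ as a generalized convex combination, with representing probability measure $P(\{u>t\},\Omega)\,dt$, of the normalized indicators $\tfrac{1}{P(\{u>t\},\Omega)}\chi_{\{u>t\}} + \R$, all lying in the unit ball; extremality forces this representing measure to be a Dirac mass, so $u + \R = \tfrac{1}{P(E,\Omega)}\chi_E + \R$ for a single set $E$, and applying extremality once more against the decomposition of $E$ into its indecomposable components forces $E$ to be simple.

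The step I expect to be the main obstacle is the extremal-point characterization on a \emph{bounded} domain, and specifically isolating the correct notion of simple set in Definition~\ref{simpleset}. On $\R^n$ (Fleming~\cite{Flemingext}, Ambrosio--Caselles--Masnou--Morel~\cite{Ambrosiocasellesconnected}) indecomposability alone characterizes the extreme indicators, but the relative perimeter $P(E,\Omega)$ ignores $\partial\Omega$, so a set can be modified near the boundary without changing its normalized indicator and a saturation condition relative to $\Omega$ must be built into the definition; correspondingly, the decomposition theorem for sets of finite perimeter into indecomposable components has to be re-established in this relative setting. A secondary technical point is to turn the coarea ``layer-cake'' identity into a rigorous Choquet-type integral representation in the locally convex quotient $BV(\Omega)/\R$, so that extremality can genuinely be read off from the representing measure.
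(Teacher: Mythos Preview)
Your overall strategy --- apply Theorem~\ref{maint} after verifying [\textbf{H0}], [\textbf{H1}] via Poincar\'e--Wirtinger and weak*-compactness, then identify $\mathrm{Ext}(B_\Na)$ --- is exactly the paper's. The differences lie in how the two inclusions for the extremal-point characterization are argued.

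For ``$\subseteq$'' the paper avoids Choquet/Bauer theory entirely by a single median split: with $F(s)=\int_{-\infty}^s P(\{u\le t\},\Omega)\,dt$ one picks $s$ with $F(s)=\tfrac12$, sets $u_1=2\min(u,s)$, $u_2=2\max(u-s,0)$, and obtains $u=\tfrac12 u_1+\tfrac12 u_2$ with $|Du_i|(\Omega)=1$ by coarea. Ordinary two-point extremality then forces $u-u_1,\,u-u_2\in\R$, hence $u$ is two-valued; the step from there to ``$E$ simple'' is by explicit decompositions, as you indicate. This bypasses precisely the technical point you flag (turning the layer-cake identity into a barycentric representation in $BV(\Omega)/\R$): your route is valid in principle but strictly heavier.

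For ``$\supseteq$'' your sketch is missing the key tool. The paper first upgrades the scalar equality to the measure identity $\tfrac{|D\chi_E|(A)}{P(E,\Omega)}=\lambda|Du_1|(A)+(1-\lambda)|Du_2|(A)$ for all measurable $A$, so that $|Du_i|$ vanishes on $E^1$ and on $E^0$. It then invokes the Dolzmann--M\"uller \emph{constancy theorem} (Lemma~\ref{Dolz}) on the indecomposable sets $E$ and $\Omega\setminus E$ to conclude $u_i=d_i\chi_E+c_i$, after which a short sign argument gives $d_1=d_2=P(E,\Omega)^{-1}$. Your proposed ``coarea rigidity'' of the level sets of $v_1,v_2$ does not by itself yield this: the equality $|D(v_1+v_2)|=|Dv_1|+|Dv_2|$ only aligns the polar parts of $Dv_i$ with $\nu_E$ on $\partial^*E$ and shows $|Dv_i|\ll|D\chi_E|$; passing from ``$Dv_i$ supported on $\partial^*E$'' to ``$v_i$ constant on $E$ and on $\Omega\setminus E$'' is exactly the content of Lemma~\ref{Dolz}, which you should cite.

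One correction on Definition~\ref{simpleset}: on a bounded domain the paper does \emph{not} use a saturation condition relative to $\Omega$. A simple set is one for which both $E$ and $\Omega\setminus E$ are indecomposable; in the proof it is decomposability of the complement --- not failure of saturation --- that provides the nontrivial convex combination showing a non-simple indicator is not extremal.
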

Finally, we apply our main result to the setting considered in \cite{unsersplines} and \cite{exactsolutionsflinth}, i.e., where the regularizer is given by $\phi(u) = \|Lu\|_\M$ for a scalar linear differential operator $L$. We remove the hypotheses concerning the structure of the null-space of $L$ and we work in the space of finite-order distributions equipped with the weak* topology. This allows us to have a general framework for these inverse problems that does not require additional assumptions on the Banach structure of the minimization domain (see \cite{chambollerepresenter} and \cite{exactsolutionsflinth} for comparison). It also justifies once more the usage of locally convex spaces in the abstract theory. In this setting, as an application of our main theorem, we are able to recover the same result as in \cite{unsersplines} and \cite{exactsolutionsflinth}.

\begin{thmnonumber}[Theorem~\ref{sparsity_diffop}]
Let $X = C_0^s(\Omega)^*$ (for $s$ sufficiently large, depending only on $L$ and $\Omega$) and $\phi(u) = \|Lu\|_{\mathcal{M}}$. Then, there exists $\overline u$ a minimizer of \eqref{prointro} such that
\begin{equation}\label{null0}
\overline{u} = \overline \psi + \sum_{i=1}^p \gamma_i G_{x_i} \, ,
\end{equation}
where $\overline\psi \in \Na = \{\psi \in C_0^s(\Omega)^*: L\psi = 0\}$, $p\leq \mbox{dim}\,(H/\A(\mathcal{N}))$, $x_1,\ldots, x_p \in \Omega$, $\gamma_1,\ldots,\gamma_p \in \R\setminus \{0\}$ with $\sum_i |\gamma_i| = \|L\overline u\|_{\mathcal{M}}$ (we denote by $G_{x}$ the fundamental solution of $L$ obtained by the Malgrange--Ehrenpreis theorem translated by $x$).
\end{thmnonumber}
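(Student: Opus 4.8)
The plan is to derive Theorem~\ref{sparsity_diffop} from the abstract representation result Theorem~\ref{maint}, applied with $X = C_0^s(\Omega)^*$ carrying the weak* topology, $\phi(u) = \|Lu\|_{\M}$, and null space $\Na = \{\psi : L\psi = 0\}$ (which indeed coincides with $\{\psi : \phi(\psi) = 0\}$). The first task is to fix the order $s$ and to check the standing hypotheses. One picks $s$ large enough, depending only on the (constant-coefficient) operator $L$ and on the bounded domain $\Omega$, so that a tempered fundamental solution $G$ of $L$ given by the Malgrange--Ehrenpreis theorem, together with all of its translates $G_x$ for $x\in\Omega$, belongs to $C_0^s(\Omega)^*$; here the boundedness of $\Omega$ makes the decay of $G$ at infinity irrelevant, and a tempered distribution has finite order. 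With such an $s$: $\phi$ is a weak*-lower semicontinuous seminorm, because $\phi(u) = \sup\{\langle u, L^*\varphi\rangle : \varphi\in C_c^\infty(\Omega),\ \|\varphi\|_\infty\le 1\}$ is a supremum of weak*-continuous linear functionals (note $L^*\varphi\in C_c^\infty(\Omega)\subseteq C_0^s(\Omega)$); $F$ is proper, convex, l.s.c.\ and $\A$ is weak*-continuous with values in the finite-dimensional $H$ by assumption; and Assumption [\textbf{H0}] holds since $\mathrm{dom}\,\phi$ already contains every finite linear combination of the $G_x$, on which $\A$ is onto by the modelling hypothesis.

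For Assumption [\textbf{H1}] --- coercivity of $\phi$ on $X/\Na$ --- I would show that $L$ descends to a linear bijection from the $\phi$-unit ball $B := \{u+\Na\in X/\Na : \phi(u)\le 1\}$ onto the closed unit ball of $(\M(\Omega),\|\cdot\|_{\M})$, with inverse the quotient of $\mu\mapsto G * \mu$ (the convolution being taken after extending $\mu$ by zero to a compactly supported measure on $\R^n$ and restricting back to $\Omega$). Surjectivity of $L$ onto $\M(\Omega)$ follows because $L(G*\mu) = \mu$ in $\Omega$ and $G*\mu\in C_0^s(\Omega)^*$ for compactly supported $\mu$; the identity $\phi(u) = \|Lu\|_{\M}$ makes this bijection isometric. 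Combining this with the weak*-to-weak* continuity of the inverse and the Banach--Alaoglu compactness of the unit ball of $\M(\Omega)$, the $\phi$-sublevel sets of $X/\Na$ are weak*-compact, which is [\textbf{H1}].

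Next comes the identification of $\mathrm{Ext}(B)$. Since $L$ restricts to an isometric isomorphism of $B$ onto the closed unit ball of $\M(\Omega)$, it maps $\mathrm{Ext}(B)$ bijectively onto the set of extremal points of the unit ball of $\M(\Omega)$, which is the classical set $\{\pm\delta_x : x\in\Omega\}$ (see \cite{shapiro} and Section~\ref{radonnormmeas}). Because any two fundamental solutions of $L$ differ by an element of $\ker L = \Na$, the $L$-preimage of $\delta_x$ in $X/\Na$ is exactly the coset $G_x + \Na$, whence
\[
\mathrm{Ext}(B) \;=\; \{\, \pm G_x + \Na \ :\ x\in\Omega \,\}\,.
\]

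Finally, Theorem~\ref{maint} produces a minimizer $\overline u$ of \eqref{prointro} of the form $\overline u = \overline\psi + \sum_{i=1}^p \gamma_i u_i$ with $\overline\psi\in\Na$, $p\le \dim(H/\A(\Na))$, $\gamma_i>0$, $\sum_i\gamma_i = \phi(\overline u) = \|L\overline u\|_{\M}$ and $u_i + \Na\in\mathrm{Ext}(B)$. By the displayed description each $u_i = \varepsilon_i G_{x_i} + \psi_i$ with $\varepsilon_i\in\{-1,+1\}$, $\psi_i\in\Na$ and $x_i\in\Omega$; absorbing $\sum_i\gamma_i\psi_i$ into $\overline\psi$ and renaming $\gamma_i\varepsilon_i\in\R\setminus\{0\}$ yields the representation \eqref{null0}, while $\sum_i|\gamma_i\varepsilon_i| = \sum_i\gamma_i = \|L\overline u\|_{\M}$; applying $L$ shows $L\overline u = \sum_i\gamma_i\varepsilon_i\delta_{x_i}$, so after merging repeated points $x_i$ (which, by the equality $\sum_i|\gamma_i| = \|L\overline u\|_{\M}$, involves no cancellation) the normalization $\sum_i|\gamma_i| = \|L\overline u\|_{\M}$ persists. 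I expect the main obstacle to be Assumption [\textbf{H1}]: proving that convolution with the Malgrange--Ehrenpreis fundamental solution, followed by the quotient map onto $C_0^s(\Omega)^*/\ker L$, is weak*-to-weak* continuous on $\M(\Omega)$. This is precisely where the quantitative choice of $s$, the mapping properties of $G$ on the bounded set $\Omega$, and the control of fundamental solutions near $\partial\Omega$ must be handled, and it is the point that most distinguishes the present locally convex approach from the Banach-space arguments of \cite{unsersplines,exactsolutionsflinth}.
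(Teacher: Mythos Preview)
Your approach mirrors the paper's: introduce $T_\Omega\mu = (\tilde\mu * G)|_\Omega$ as a right inverse to $L$, observe that $\pi_\Na \circ T_\Omega$ is a linear bijection from the unit ball of $\M(\Omega)$ onto $B_\Na$, transport the known extremal points $\{\pm\delta_x : x\in\Omega\}$ (the paper does this via Lemma~\ref{fund}(ii), so injectivity already suffices and the full isometry is not needed), and invoke Theorem~\ref{maint}.

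The one substantive difference is in how [\textbf{H1}] is verified. You propose to obtain compactness of $B_\Na$ from weak*-to-weak* continuity of $\pi_\Na\circ T_\Omega$ combined with Banach--Alaoglu on $\M(\Omega)$. That continuity, however, is not clear and may well fail: unwinding the pairing gives $(T_\Omega\mu)(\varphi) = \int_\Omega (\check G * \tilde\varphi)\,d\mu$, and $(\check G * \tilde\varphi)|_\Omega$ has no reason to lie in $C_0(\Omega)$ (it need not vanish at $\partial\Omega$), so weak* convergence of measures does not pass through this integral. The paper avoids the issue by proving only the \emph{norm} bound $\|T_\Omega\mu\|_{C_0^s(\Omega)^*}\le C\|\mu\|_\M$ (Lemma~\ref{neworder}); this already yields, for any net in a $\phi_\Na$-sublevel set, representatives bounded in $X = C_0^s(\Omega)^*$, and Banach--Alaoglu is then applied in $X$ itself, with lower semicontinuity of $\phi_\Na$ closing the argument (Proposition~\ref{par}). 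So your diagnosis of where the difficulty lies is exactly right, but the resolution is a norm estimate rather than a continuity statement. A minor aside: [\textbf{H0}] is simply assumed on $\A$ in the paper's setup, not derived from properties of the $G_x$.
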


\section{Setting and preliminary results}
\subsection{Basic assumptions on the functionals}\label{assumptions}
Let $(X,\tau)$ be a real locally convex space, i.e., the topology is
generated by a separating family of seminorms, and $(X^*,\tau')$ its
topological dual equipped with the weak* topology. Further, let $H$ be
an $N$-dimensional real Hilbert space and
$\mathcal{A} : X \rightarrow H$ a linear continuous operator and we
denote by $\mathcal{A}^* : H \rightarrow X^*$ its continuous adjoint,
defined thanks to Riesz's theorem as
\begin{equation*}
  \langle \mathcal{A}^* w , u\rangle = \langle w, \mathcal{A} u\rangle  
\end{equation*} 
for every $u\in X$ and $w \in H$.  Notice that we have denoted by
$\langle\cdot,\cdot \rangle$ both the scalar product in the Hilbert
space $H$ the duality product between $X$ and $X^*$.

As anticipated in the introduction we deal with a variational problem
of the type
\begin{equation}
\inf_{u\in X} \phi(u) + F(\A u)\,.
\end{equation}
In the remaining part of this section we describe the assumptions on $F$ and $\phi$ separately.

\smallskip

\textbf{- \emph{Assumptions on $F$}}:

\smallskip

We consider
\begin{equation*}
F: H \rightarrow (-\infty, +\infty] 
\end{equation*}
a proper convex function that is coercive, %
and lower semi-continuous with respect to the topology of $H$, which is
the standard topology on finite-dimensional spaces.

\smallskip

\textbf{- \emph{Assumptions on $\phi$}}:

\smallskip

We consider 
\begin{equation*}
\phi: X \rightarrow [0, +\infty]\, ,
\end{equation*}
a seminorm and that is lower semi-continuous with respect to the topology of $X$.
We make the following additional assumption:
\begin{enumerate}[label={[\textbf{H0}]}]
\item\label{H0} $\mathcal{A}(\mbox{dom}\, \phi) = H$\, ,
\end{enumerate}
where $\mbox{dom}\,\phi$ denotes the domain of $\phi$, i.e.
\begin{displaymath}
\mbox{dom}\,\phi = \{u\in X : \phi(u) < +\infty\}\,.
\end{displaymath}
Defining the null-space of $\phi$ as $\mathcal{N} =  \{u\in X: \phi(u)=0\}$, which is a closed subspace of $X$, we consider the following quotient space:
\begin{equation}
X_{\mathcal{N}} := X \Big{/}\mathcal{N}\, , 
\end{equation}
endowed with the quotient topology. It is well-known that
$X_{\mathcal{N}}$ is a locally convex space \cite{Rudin}.  We call
$\pi_{\mathcal{N}}: X \rightarrow X_{\mathcal{N}}$ the canonical
projection onto the quotient space and for simplifying the notation,
given $u\in X$ we denote by $u_{\mathcal{N}} = u + \mathcal{N}$ the
image of $u$ in the quotient space by $\pi_{\mathcal{N}}$. Likewise,
for $U \subset X$, we tacitly identify the Minkowski sum
$U + \Na \subset X$ with its image under $\pi_\Na$ in $X_\Na$.
 
Define then $\phi_{\mathcal{N}} : X_{\mathcal{N}} \rightarrow [0,+\infty]$ as  
\begin{equation}\label{mapquot}
\phi_{\mathcal{N}} (u_{\mathcal{N}}) := \phi(u) \,.
\end{equation}
Note that $\phi_{\mathcal{N}}$ is well-defined as $\phi$ is constant on the set $u + \mathcal{N}$ for every $u \in X$. Moreover, it is a seminorm in $X_{\mathcal{N}}$.

We assume that
\begin{enumerate}[label={[\textbf{H1}]}]
\item\label{H1} $\phi_{\mathcal{N}}$ is coercive, i.e. the sublevel sets
\begin{displaymath}
S^-(\phi_{\mathcal{N}},\alpha):=\{u_\mathcal{N} \in X_{\mathcal{N}} : \phi_{\mathcal{N}}(u_\mathcal{N})\leq \alpha\}
\end{displaymath}
are compact for every $\alpha > 0$.
\end{enumerate}
\begin{rmk}\label{coerclsc}
  Note that $\phi_\Na$ is lower semi-continuous in $X_\Na$: Indeed, as
  $\phi$ is lower semi-continuous, the superlevel-sets
  $S^+(\phi, \alpha) = \{ u \in X: \phi(u) > \alpha\}$ are open in $X$
  for each $\alpha$.  Now, as $\phi_\Na(u_\Na) > \alpha$ if and only
  if $\phi(u) > \alpha$, we have
  $S^+(\phi_\Na, \alpha) = \pi_\Na(S^+(\phi, \alpha))$. Since $\pi_N$
  is an open map ($X$ is a topological group with respect to
  addition), each $S^+(\phi_\Na, \alpha)$ is open in $X_\Na$ meaning
  that $\phi_\Na$ is lower semi-continuous.

  As a consequence, in order to obtain \ref{H1}, it suffices
  that each $S^-(\phi_\Na, \alpha)$ is contained in a compact set.
\end{rmk}
From now on we %
assume that $\A$, $F$ and $\phi$ satisfy the properties described
above.

\subsection{Existence of minimizers}

We state the following minimization problem:
\begin{pro}[Minimization problem in $X$]\label{problem}
Given $\phi$, $\mathcal{A}$ and $F$ with the assumptions given in the previous section, define for $u\in X$ the following functional:
\begin{equation}
J(u) := \phi(u) + F(\mathcal{A} u)\, .
\end{equation}
We aim at solving
\begin{equation*}
\min_{u \in X} J(u)\,.
\end{equation*}
\end{pro}

In order to prove the existence of minimizers for Problem \ref{problem} we state an auxiliary minimization problem in the quotient space $X_{\mathcal{N}}$.
\begin{pro}[Minimization problem in $X_{\mathcal{N}}$]\label{auxpro}
Given $F$, $\phi$ and $\mathcal{A}$ with the assumptions given in the previous section, we define
\begin{equation}
  \label{eq:auxpro}
\mathscr{J}(u_\mathcal{N}) = \phi_{\mathcal{N}}(u_{\mathcal{N}}) + \inf_{\psi \in \mathcal{N}} F(\mathcal{A} (u +\psi))\, .
\end{equation}
We want to solve
\begin{equation*}
\min_{u_{\mathcal{N}} \in X_{\mathcal{N}}} \mathscr{J}(u_{\mathcal{N}})\, .
\end{equation*}
\end{pro}
Note that the functional $\mathscr{J}$ is well-defined in $X_{\mathcal{N}}$ as both summands in~\eqref{eq:auxpro} are constant on $u + \mathcal{N}$ for every $u \in X$. 
We aim at proving existence of minimizers for Problem \ref{auxpro}. For this reason we firstly prove a lemma about the coercivity %
of functionals defined in quotient spaces.

\begin{lemma}\label{coercive_quotient}
  Let $Y$ be a locally convex space and
  $f: Y \rightarrow (-\infty , +\infty]$ be coercive.  Given
  $\mathcal{M} \subset Y$ a closed subspace of $Y$, we define,
  $\widetilde f : Y_\M \rightarrow (-\infty, +\infty]$ on the space
  $Y_\M = Y/\M$ as
  \begin{displaymath}
    \widetilde f(u_{\mathcal{M}}) = \inf_{v \in \mathcal{M}} f(u + v)\, .
  \end{displaymath}  
  Then, $\widetilde f$ is coercive with respect to the quotient
  topology of $Y_\M$.
\end{lemma}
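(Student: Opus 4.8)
The plan is to show directly that each sublevel set $S^-(\widetilde f,\alpha)$, $\alpha>0$, is compact in the quotient topology of $Y_\M$, by relating it to the sublevel sets of $f$ via the canonical (continuous) projection $\pi_\M : Y \to Y_\M$. The one point to keep in mind is that $f$ is not assumed lower semi-continuous, so the infimum in $\widetilde f(u_\M)=\inf_{v\in\M}f(u+v)$ need not be attained, and one should not expect $S^-(\widetilde f,\alpha)$ to coincide with $\pi_\M(S^-(f,\alpha))$.

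First I would establish the identity
\[
S^-(\widetilde f,\alpha)=\bigcap_{n\in\N}\pi_\M\!\left(S^-\!\left(f,\alpha+\tfrac1n\right)\right).
\]
Both inclusions follow directly from the definition of $\widetilde f$. If $\widetilde f(u_\M)\le\alpha$, then for every $n$ there is $v_n\in\M$ with $f(u+v_n)\le\alpha+\tfrac1n$, i.e.\ a representative of $u_\M$ lying in $S^-(f,\alpha+\tfrac1n)$. Conversely, if $u_\M$ belongs to every $\pi_\M(S^-(f,\alpha+\tfrac1n))$, one obtains representatives $u+v_n$ with $v_n\in\M$ and $f(u+v_n)\le\alpha+\tfrac1n$, whence $\widetilde f(u_\M)\le\inf_n f(u+v_n)\le\alpha$.

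It then remains to perform the compactness bookkeeping. Coercivity of $f$ makes each $S^-(f,\alpha+\tfrac1n)$ compact in $Y$, so $\pi_\M(S^-(f,\alpha+\tfrac1n))$ is compact in $Y_\M$ as the continuous image of a compact set, and it is closed since $Y_\M$ is Hausdorff (recall $\M$ is a closed subspace of the Hausdorff space $Y$). Hence $S^-(\widetilde f,\alpha)$, being an intersection of closed sets, is closed, and being contained in the compact set $\pi_\M(S^-(f,\alpha+1))$ it is itself compact; this is exactly the coercivity of $\widetilde f$. (By the reasoning in Remark~\ref{coerclsc} one could even stop one step earlier, since containment of $S^-(\widetilde f,\alpha)$ in a compact set already suffices.) The only genuine obstacle is the possible non-attainment of the infimum defining $\widetilde f$, and the relaxation by $\tfrac1n$ together with the passage to a countable intersection — harmless because the quotient is Hausdorff — is precisely what circumvents it.
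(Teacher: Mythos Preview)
Your proof is correct and follows essentially the same route as the paper: both arguments rest on the identity expressing $S^-(\widetilde f,\alpha)$ as an intersection of the projected sublevel sets $\pi_\M(S^-(f,\alpha+\varepsilon))$ (the paper intersects over all $\varepsilon>0$, you over $\varepsilon=\tfrac1n$, which is the same since the family is nested), and then conclude compactness. Your explicit invocation of the Hausdorff property of $Y_\M$ to deduce closedness of the compact images is a nice clarification; the only quibble is your parenthetical appeal to Remark~\ref{coerclsc}, which relies on lower semi-continuity and therefore does not apply to $\widetilde f$ here---but your main argument does not depend on it.
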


\begin{proof}
  By coercivity, the sublevel sets $S^-(f, \alpha)$ are compact for
  each $\alpha$. Since the projection $\pi_{\M}$ is continuous,
  each $\pi_\M(S^-(f, \alpha))$ is compact in $Y_\M$. Now, 
  \[ \pi_\M(S^-(f,\alpha)) = \{u_\M \in Y_\M: \text{there exists} \
    v \in \M \ \text{such that} \ f(u + v) \leq \alpha \}.
  \]
  Since, by definition, $\widetilde f(u_\M) \leq \alpha$ if and only
  if for each $\varepsilon > 0$ there exists $v \in \M$ such that
  $f(u+v) \leq \alpha + \varepsilon$, the identity
  $S^-(\widetilde f,\alpha) = \bigcap_{\varepsilon > 0}
  \pi_\M(S^-(f,\alpha+\varepsilon))$ follows. The right-hand side is
  compact as an intersection of compact sets, hence each
  $S^-(\widetilde f,\alpha)$ is compact, showing the coercivity of
  $\tilde f$.
\end{proof}

\begin{prop}\label{auxi}
There exists a minimizer for Problem \ref{auxpro}.
\end{prop}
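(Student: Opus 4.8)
The plan is to apply the direct method of the calculus of variations to the functional $\mathscr{J}$ on the locally convex space $X_{\mathcal{N}}$. First I would observe that $\mathscr{J}$ is not identically $+\infty$: by assumption [\textbf{H0}] we have $\mathcal{A}(\mathrm{dom}\,\phi) = H$, so picking any $u \in \mathrm{dom}\,\phi$ gives $\phi_{\mathcal{N}}(u_{\mathcal{N}}) < +\infty$, and since $F$ is proper there is $h \in H$ with $F(h) < +\infty$; choosing $u \in \mathrm{dom}\,\phi$ with $\mathcal{A}u = h$ yields $\mathscr{J}(u_{\mathcal{N}}) \le \phi_{\mathcal{N}}(u_{\mathcal{N}}) + F(h) < +\infty$. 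Hence $m := \inf_{u_{\mathcal{N}}} \mathscr{J}(u_{\mathcal{N}}) < +\infty$, and since $\phi_{\mathcal{N}} \ge 0$ and $F$ is bounded below (being proper, convex, coercive, lsc on a finite-dimensional space, $F$ attains a finite minimum), we also have $m > -\infty$.

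Next I would establish coercivity of $\mathscr{J}$. Write $\mathscr{J}(u_{\mathcal{N}}) = \phi_{\mathcal{N}}(u_{\mathcal{N}}) + G(u_{\mathcal{N}})$ where $G(u_{\mathcal{N}}) = \inf_{\psi \in \mathcal{N}} F(\mathcal{A}(u+\psi))$. By Lemma \ref{coercive_quotient} applied with $Y = X$, $\mathcal{M} = \mathcal{N}$ and $f = F \circ \mathcal{A}$ — note $F \circ \mathcal{A}$ is coercive on $X$ because $F$ is coercive on $H$ and $\mathcal{A}$ is continuous, so preimages of bounded sets... actually more carefully, I would instead use that $G$ is bounded below by $\min_H F > -\infty$, and rely on [\textbf{H1}]: the sublevel set $S^-(\mathscr{J}, \alpha)$ is contained in $S^-(\phi_{\mathcal{N}}, \alpha - \min_H F)$, which is compact by [\textbf{H1}]. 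Thus every sublevel set of $\mathscr{J}$ is contained in a compact set.

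Then I would check lower semicontinuity of $\mathscr{J}$ on $X_{\mathcal{N}}$. The term $\phi_{\mathcal{N}}$ is lsc by Remark \ref{coerclsc}. For $G$, since $H/\mathcal{A}(\mathcal{N})$ is finite-dimensional, one shows $G(u_{\mathcal{N}})$ equals $F^{\sharp}(P[\mathcal{A}u])$ where $F^{\sharp}$ is the infimal projection of $F$ along the subspace $\mathcal{A}(\mathcal{N})$ and $P$ is the quotient map $H \to H/\mathcal{A}(\mathcal{N})$; this infimal projection of a proper convex coercive lsc function along a subspace is again convex and lsc (coercivity prevents the infimum from being unattained, so lower semicontinuity is preserved), and $u_{\mathcal{N}} \mapsto P[\mathcal{A}u]$ is a well-defined continuous linear map from $X_{\mathcal{N}}$ to the finite-dimensional space $H/\mathcal{A}(\mathcal{N})$. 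Hence $G$ is lsc, and $\mathscr{J}$ is lsc as a sum. Finally, taking a minimizing sequence $(u^n_{\mathcal{N}})$, for $n$ large it lies in a fixed sublevel set contained in a compact set $K$; extracting a subnet converging to some $\bar u_{\mathcal{N}} \in K$ (here I use nets rather than sequences since $X_{\mathcal{N}}$ need not be metrizable) and using lower semicontinuity gives $\mathscr{J}(\bar u_{\mathcal{N}}) \le \liminf \mathscr{J}(u^n_{\mathcal{N}}) = m$, so $\bar u_{\mathcal{N}}$ is a minimizer.

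The main obstacle I anticipate is the lower semicontinuity of the partially-minimized term $G$: the infimum over $\psi \in \mathcal{N}$ is over a possibly infinite-dimensional, non-compact set, so lower semicontinuity is not automatic. The key is to push the problem down to the finite-dimensional quotient $H/\mathcal{A}(\mathcal{N})$ — where $\mathcal{A}(\mathcal{N})$ is automatically closed, being a subspace of a finite-dimensional space — and to invoke the coercivity of $F$ there to guarantee the infimum defining the marginal function is attained, which is exactly what makes the marginal function lower semicontinuous.
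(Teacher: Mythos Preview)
Your proposal is correct and follows essentially the same approach as the paper: finiteness of the infimum via [\textbf{H0}] and boundedness below of $F$, lower semicontinuity of the partially-minimized term by passing to the finite-dimensional quotient $H/\mathcal{A}(\mathcal{N})$ and invoking coercivity of $F$ (the paper does this via Lemma~\ref{coercive_quotient}), and coercivity of $\mathscr{J}$ via the inclusion $S^-(\mathscr{J},\alpha)\subset S^-(\phi_{\mathcal{N}},\alpha-\inf F)$ together with [\textbf{H1}]. The only cosmetic difference is in the final step: the paper concludes via the finite intersection property of the nested compact sublevel sets, whereas you extract a convergent subnet from a minimizing sequence and use lower semicontinuity; these are equivalent implementations of the direct method.
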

\begin{proof}
  As $F$ is %
  proper, using Hypothesis \ref{H0} we infer that the infimum
  of Problem~\ref{auxpro} is not %
  $+\infty$. Likewise, since $F$ is convex, lower semi-continuous and
  coercive, it is bounded from below such that the infimum of
  Problem~\ref{auxpro} is also not $-\infty$. Let us show that the
  proper and convex function
  $u \mapsto \inf_{\psi \in \Na} F(\A(u + \psi))$ is lower
  semi-continuous in $X$. For that purpose, observe that $\A(\Na)$ is
  a subspace of the finite-dimensional space $H$ and hence
  closed. Denote by $H_\Na$ the quotient space $H/\A(\Na)$ on which we
  define $F_\Na: H_\Na \to (-\infty,+\infty]$ according to
  \[
    F_\Na(w_\Na) = \inf_{\eta \in \A(\Na)} F(w + \eta)
  \]
  where $w_\Na = w + \A(\Na)$.  Note that this functional is well-defined on $H_\mathcal{N}$ as given $w^1, w^2 \in H$ with $w^1 - w^2 \in \mathcal{A}(\mathcal{N})$ there holds
  
  \begin{equation*}
  \inf_{\eta \in \A(\Na)} F(w^1 + \eta) = \inf_{\eta \in \A(\Na)} F(w^2 + \eta)\,.
  \end{equation*}
 Moreover, $F_\Na$ is proper
  and convex.  As $F$ is assumed to be coercive, applying
  Lemma~\ref{coercive_quotient} yields that $F_\Na$ is also coercive
  and lower semi-continuous in particular. Now,
  \[
    \inf_{\psi \in \Na} F(\A(u + \psi)) = (F_\Na \circ
    \pi_{\A(\Na)} \circ \A)(u)
  \]
  where the right-hand side is a composition of continuous linear maps
  and a lower semi-continuous functional and hence, lower
  semi-continuous. Obviously, replacing $u$ by $u + \varphi$,
  $\varphi \in \Na$ does not change the value of this functional, so
  by the same argument as in Remark~\ref{coerclsc}, we deduce that
  $u_\Na \mapsto \inf_{\psi \in \Na} F(\A(u + \psi))$ and
  consequently $\mathscr{J}$, is lower semi-continuous.

Notice now that %
\begin{displaymath}
S^-(\mathscr{J},\alpha) := \{u_{\mathcal{N}} \in X_{\mathcal{N}}: \mathscr{J}(u_{\mathcal{N}}) \leq \alpha\} \subset S^-(\phi_\mathcal{N},\alpha - \inf F) \, .
\end{displaymath}
Therefore, as $\mathscr{J}$ is lower semi-continuous and
$\phi_{\mathcal{N}}$ is coercive due to Hypothesis \ref{H1}, we
infer that $S^-(\mathscr{J},\alpha)$ is compact for every $\alpha \in \R$.

We want to prove that $\mathscr{J}$ admits a minimizer in $X_{\mathcal{N}}$. Notice that the collection $\{S^-(\mathscr{J},\alpha)\}_{\alpha>\inf \mathscr{J}}$ has the finite intersection property. As the set $S^-(\mathscr{J},\alpha_0)$ is compact for an arbitrary $\alpha_0 > \inf \mathscr{J}$ and each $S^-(\mathscr{J},\alpha)$ is closed, we infer that 
\begin{displaymath}
\bigcap_{\inf \mathscr{J} <\alpha \leq \alpha_0} S^-(\mathscr{J},\alpha) \neq \emptyset\, .
\end{displaymath}
Choosing $\overline u_\mathcal{N} \in \bigcap_{\inf \mathscr{J} <\alpha\leq \alpha_0} S^-(\mathscr{J},\alpha)$ we notice that it is a minimizer of $\mathscr{J}$ as
\begin{displaymath}
\mathscr{J}(\overline{u}_\mathcal{N}) \leq \inf_{u_{\mathcal{N}} \in X_{\mathcal{N}}} \mathscr{J}(u_{\mathcal{N}})\,. \qedhere
\end{displaymath}
\end{proof}
We are now in position to prove the existence of minimizers for Problem \ref{problem}.
\begin{thm}\label{ex}
Given $\overline u_\mathcal{N} = \overline u + \mathcal{N}$ a minimizer for Problem \ref{auxpro}, there exists $\overline \psi \in \mathcal{N}$ such that $\overline u + \overline \psi$ is a minimizer for Problem \ref{problem}.
\end{thm}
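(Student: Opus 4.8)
The plan is to reduce the minimization of $J$ over $X$ to the already-solved minimization of $\mathscr{J}$ over $X_\mathcal{N}$, using that $\phi$ is constant along cosets of $\mathcal{N}$, and then to upgrade the infimum in the definition of $\mathscr{J}$ to an attained minimum for the chosen representative $\overline u$; this last step is where the finite dimensionality of $H$ is essential.

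First I would record the link between the two problems. Since $\phi$ is a seminorm and $\phi(\psi)=0$ for $\psi\in\mathcal{N}$, one has $\phi(u+\psi)=\phi(u)=\phi_\mathcal{N}(u_\mathcal{N})$ for all $\psi\in\mathcal{N}$, hence
\begin{equation*}
\inf_{\psi\in\mathcal{N}} J(u+\psi) = \phi_\mathcal{N}(u_\mathcal{N}) + \inf_{\psi\in\mathcal{N}} F(\mathcal{A}(u+\psi)) = \mathscr{J}(u_\mathcal{N}).
\end{equation*}
Grouping the infimum over $X$ according to cosets of $\mathcal{N}$ then gives
\begin{equation*}
\inf_{u\in X} J(u) = \inf_{u_\mathcal{N}\in X_\mathcal{N}} \inf_{\psi\in\mathcal{N}} J(u+\psi) = \inf_{u_\mathcal{N}\in X_\mathcal{N}} \mathscr{J}(u_\mathcal{N}) = \mathscr{J}(\overline u_\mathcal{N}),
\end{equation*}
the last equality because $\overline u_\mathcal{N}$ minimizes $\mathscr{J}$ by Proposition~\ref{auxi}. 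In particular $\mathscr{J}(\overline u_\mathcal{N})$ is finite, and since $\phi_\mathcal{N}\geq 0$ and $F$ is bounded below, both $\phi_\mathcal{N}(\overline u_\mathcal{N})$ and $\inf_{\psi\in\mathcal{N}} F(\mathcal{A}(\overline u+\psi))$ are finite.

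Next I would show this inner infimum is attained at $\overline u$. The set $\mathcal{A}(\mathcal{N})$ is a linear subspace of the finite-dimensional space $H$, hence closed, and the map $\eta\mapsto F(\mathcal{A}\overline u+\eta)$ is proper, convex, lower semi-continuous and coercive on $\mathcal{A}(\mathcal{N})$ because $F$ is (its sublevel sets there are intersections of the compact sublevel sets of $F$ with a closed affine subspace, hence compact). A proper lower semi-continuous function with compact sublevel sets on a finite-dimensional space attains its minimum, so there is $\eta_0\in\mathcal{A}(\mathcal{N})$ with $F(\mathcal{A}\overline u+\eta_0)=\inf_{\psi\in\mathcal{N}} F(\mathcal{A}(\overline u+\psi))$. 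Picking $\overline\psi\in\mathcal{N}$ with $\mathcal{A}\overline\psi=\eta_0$ and using $\phi(\overline u+\overline\psi)=\phi_\mathcal{N}(\overline u_\mathcal{N})$, we get
\begin{equation*}
J(\overline u+\overline\psi) = \phi_\mathcal{N}(\overline u_\mathcal{N}) + F(\mathcal{A}\overline u+\eta_0) = \mathscr{J}(\overline u_\mathcal{N}) = \inf_{u\in X} J(u),
\end{equation*}
so $\overline u+\overline\psi$ minimizes $J$. The only genuinely delicate point is the attainment of $\inf_{\psi\in\mathcal{N}} F(\mathcal{A}(\overline u+\psi))$: without the finite dimensionality of $H$ one could not conclude that the coercive lower semi-continuous function $\eta\mapsto F(\mathcal{A}\overline u+\eta)$ has a minimizer, and the reduction would fail; the coset decomposition of the infimum and the constancy of $\phi$ on cosets are routine. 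I would also note in passing that $\phi(\overline u+\overline\psi)=\phi_\mathcal{N}(\overline u_\mathcal{N})$, a fact that will be convenient in the later decomposition of $\overline u$ into extremal points.
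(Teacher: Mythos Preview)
Your proof is correct and follows essentially the same approach as the paper: both establish that $\inf_X J = \inf_{X_\mathcal{N}} \mathscr{J}$ via the coset decomposition, then minimize $\eta \mapsto F(\mathcal{A}\overline u + \eta)$ over the finite-dimensional closed subspace $\mathcal{A}(\mathcal{N})$ using coercivity and lower semi-continuity of $F$, and finally lift the minimizer $\eta_0$ back to some $\overline\psi\in\mathcal{N}$. Your justification of the attainment step is in fact slightly more explicit than the paper's.
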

\begin{proof}
Notice that 
for every $\psi \in \mathcal{N}$ we have
\begin{equation}\label{equal}
\inf_{u \in X} \phi(u) + F(\mathcal{A} u) = \inf_{u\in X} \phi(u) + F(\mathcal{A} (u+ \psi))\,.
\end{equation}
Hence taking the infimum with respect to $\psi \in \mathcal{N}$ on both sides we obtain that Problem \ref{problem} and Problem \ref{auxpro} have the same infimum. 
Let $\overline u_\mathcal{N}$ be a minimizer for Problem \ref{auxpro}. Then consider the following minimization problem:
\begin{displaymath}
\inf_{\eta \in \mathcal{A}(\mathcal{N})} F(\mathcal{A}\overline u +  \eta)\, .
\end{displaymath}
As $F$ is proper, convex, lower semi-continuous and coercive as well as
$\A(\Na)$ is finite-dimensional and hence closed in $H$, %
the infimum is realized and finite. Denoting by $\overline \eta$ a minimizer, we choose $\overline \psi \in \mathcal{N}$ such that $\mathcal{A} \overline \psi = \overline\eta$. Then, %
$\overline v := \overline u + \overline \psi$ is a minimizer for Problem \ref{problem}. Indeed,
\begin{eqnarray*}
\phi(\overline v) + F(\mathcal{A} \overline v) &=& \phi(\overline u) +  \inf_{\eta \in \mathcal{A}(\mathcal{N})} F(\mathcal{A} \overline u +  \eta)  \\
&=& \phi(\overline u) +  \inf_{\psi \in \mathcal{N}} F(\mathcal{A}(\overline u + \psi))\\
 & = & \mathscr{J}(\overline u_\mathcal{N})\, .
\end{eqnarray*}
Then, as the two minimization problems have equal infimum, we conclude.
\end{proof}

\begin{rmk}
The converse of Theorem \ref{ex} holds true. Namely, if $\overline u$ a minimizer for Problem \ref{problem}, then $\overline u_\mathcal{N} = \overline u + \mathcal{N}$ is a minimizer of Problem \ref{auxpro}. Indeed, for every $v_\mathcal{N} = v + \mathcal{N}$ and denoting by $\overline \psi$ a minimizer of $\psi \mapsto F(\mathcal{A}(v + \psi))$ in $\mathcal{N}$ (that exists for similar arguments as in the previous proof) we have
\begin{align*}
\mathscr{J}(\overline u_\mathcal{N}) \leq \phi(\overline{u}) + F(\mathcal{A}\overline{u}) \leq \phi(v + \overline \psi) +  F(\mathcal{A}(v + \overline \psi))  = \phi_\mathcal{N}(v_\mathcal{N}) + \inf_{\psi \in \mathcal{N}} F(\mathcal{A} (v +\psi))\,.
\end{align*}
\end{rmk}

\subsection{Optimality conditions}
In this section we want to obtain optimality conditions for Problem \ref{auxpro} deriving a dual formulation and showing that under our hypotheses we have \emph{no gap} between the primal and the dual problem. 

\medskip
In order to perform this analysis we need to endow the space $X_{\mathcal{N}}^*$ equipped with the weak*-topology with the associated Mackey topology. For the reader's convenience we remind the definition of the Mackey topology and we refer to \cite{Schaefer} for a comprehensive treatment. Given a real locally convex space $Y$, define the following family of seminorms on $Y^*$: %
\begin{equation}
\rho_A(u^*) = \sup\{|\langle u, u^*\rangle| : u \in A\} 
\end{equation}
for every $A\subset Y$ %
absolutely convex and weakly compact. This family of seminorms
generates a locally convex topology on $Y^*$ %
that is called \emph{Mackey topology} and it is denoted by
$\tau(Y^*,Y)$. It is the strongest topology on $Y^*$ such that
$Y$ is still the dual of $Y^*$  (see Theorem 9 in Section A.4 of \cite{Aubin}).

Further, we need the notion of \emph{Fenchel conjugate} functionals
which are defined as follows.  Given a real locally convex space
$Y$ and a proper function $f: Y \rightarrow
(-\infty,+\infty]$ we denote by $f^* : Y^* \rightarrow
(-\infty,+\infty]$ the conjugate of $f$ defined as
\begin{displaymath}
f^*(x^*) = \sup_{x\in Y} \big[\langle x^*, x\rangle - f(x)\big]\,.
\end{displaymath}

\medskip

In order to obtain the optimality conditions we will use the following well-known proposition (see Proposition 5 in Section 3.4.3 of \cite{Aubin}).

\begin{prop}\label{mackey}
Let $Y$ be a real locally convex space. Given a proper, lower semi-continuous, convex function $f:Y \rightarrow (-\infty,+\infty]$, the following statements are equivalent:
\begin{enumerate}[label=\roman*)]
\item $f^*$ is continuous in zero for the Mackey topology $\tau(Y^*,Y)$.
\item for every $\alpha \in \R$, the sublevel-set
\begin{displaymath}
  S^-(f,\alpha) := \{x \in Y: f(x) \leq \alpha \}
\end{displaymath}
is compact with respect to the weak topology.
\end{enumerate}
\end{prop}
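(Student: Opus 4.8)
I would prove both implications by convex duality. The three ingredients are: the Fenchel--Moreau biconjugation theorem (which applies because a proper, convex, $\tau$-lower semi-continuous $f$ is also lower semi-continuous for $\sigma(Y,Y^*)$, since convex $\tau$-closed sets are weakly closed); the bipolar theorem; and the description of $\tau(Y^*,Y)$ as the topology of uniform convergence on the absolutely convex weakly compact subsets of $Y$ — equivalently, the polars $A^{\circ}=\{u^*\in Y^*:|\langle u^*,x\rangle|\le 1\ \text{for all}\ x\in A\}$ of such sets $A$ form a neighbourhood base at $0$. I would also use the elementary fact that a convex function bounded above on a neighbourhood of a point in the interior of its domain is continuous there.

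\textbf{From (ii) to (i).} First I would reduce to the case $f\ge 0$, $f(0)=0$: since $f$ is proper and weakly lower semi-continuous with a non-empty weakly compact sublevel set, $m:=\inf_Y f$ is finite and attained at some $\bar x$, and replacing $f$ by $x\mapsto f(x+\bar x)-m$ only translates the (still weakly compact) sublevel sets and alters $f^*$ by the constant $-m$ and the weak*-continuous — hence Mackey-continuous — functional $u^*\mapsto\langle u^*,\bar x\rangle$. I would then take $A:=S^-(f,1)$, a convex weakly compact set containing $0$, and let $D$ be the convex hull of $A\cup(-A)$. Being the image of the compact product $A\times A\times[0,1]$ under the map $(a,a',\lambda)\mapsto\lambda a-(1-\lambda)a'$, which is continuous for $\sigma(Y,Y^*)$, the set $D$ is weakly compact, and it is clearly absolutely convex, so $D^{\circ}$ is a Mackey-neighbourhood of $0$. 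Next I would check that $f^*\le 1$ on $D^{\circ}$: if $0\le f(x)=:t\le 1$ then $x\in A\subset D$, so $\langle u^*,x\rangle-f(x)\le\langle u^*,x\rangle\le 1$ for $u^*\in D^{\circ}$; if $f(x)=t>1$, convexity and $f(0)=0$ give $f(x/t)\le 1$, i.e.\ $x/t\in A\subset D$, so $\langle u^*,x\rangle\le t$ and $\langle u^*,x\rangle-f(x)\le 0$. Taking the supremum over $x$ gives $f^*\le 1$ on $D^{\circ}$; since $f^*$ is convex and $f^*(0)=-\inf f$ is finite, it follows that $f^*$ is continuous at $0$ for $\tau(Y^*,Y)$.

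\textbf{From (i) to (ii).} Mackey-continuity of $f^*$ at $0$ provides an absolutely convex weakly compact $A\subset Y$ and a constant $c\in\R$ with $f^*(u^*)\le c$ for all $u^*\in A^{\circ}$. By Fenchel--Moreau, $f=f^{**}$, i.e.\ $f(x)=\sup_{u^*\in Y^*}\big(\langle u^*,x\rangle-f^*(u^*)\big)$; restricting the supremum to $A^{\circ}$ and using $f^*\le c$ there,
\[
  f(x)\ \ge\ \sup_{u^*\in A^{\circ}}\langle u^*,x\rangle-c\ =\ \mu_A(x)-c,
\]
where $\mu_A$ is the Minkowski gauge of $A$ and the identity $\sup_{u^*\in A^{\circ}}\langle u^*,x\rangle=\mu_A(x)$ is the bipolar theorem (valid since $A$ is absolutely convex and weakly closed, so $A^{\circ\circ}=A$). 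Hence, for each $\alpha\in\R$, $S^-(f,\alpha)\subset\{x:\mu_A(x)\le\alpha+c\}$, which is contained in $(\alpha+c)A$ (or, when $\alpha+c\le 0$, in the weakly closed set $\bigcap_{t>0}tA\subset A$, possibly empty), hence weakly compact; and $S^-(f,\alpha)$, being convex and $\tau$-closed, is weakly closed, so it is a weakly closed subset of a weakly compact set and therefore weakly compact.

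\textbf{Main obstacle.} The one genuinely delicate point is, in the first implication, the passage from the \emph{merely convex} sublevel set to an \emph{absolutely convex} weakly compact set: taking the closed absolutely convex hull and invoking Krein's theorem would require a (quasi-)completeness hypothesis on $Y$ that is not assumed, whereas realising $D$ as the continuous image of the compact product $A\times A\times[0,1]$ avoids this entirely. Apart from that, the proof is routine convex-analytic bookkeeping; the thing needing care throughout is the consistent distinction between the weak* topology $\sigma(Y^*,Y)$ and the Mackey topology $\tau(Y^*,Y)$ on $Y^*$ — both have $Y$ as their dual, which is precisely what makes biconjugation and the bipolar theorem applicable.
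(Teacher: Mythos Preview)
Your proof is correct. Note, however, that the paper does not actually prove Proposition~\ref{mackey} in full generality: it is stated as a known result with a reference, and the Remark that follows only sketches the argument in the special case $f=\phi_\Na$, a lower semi-continuous seminorm. In that setting the proof collapses considerably, since $\phi_\Na^*$ is the indicator of the set $\{\rho_S\le 1\}$ with $S=\{\phi_\Na\le 1\}$: the implication (ii)$\Rightarrow$(i) is then immediate from the very definition of the Mackey topology, and for (i)$\Rightarrow$(ii) the paper uses a direct separation argument to show that $S$ is contained in a finite sum $\varepsilon_1^{-1}A_1+\cdots+\varepsilon_n^{-1}A_n$ of absolutely convex weakly compact sets.

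Your argument is the natural extension of this to arbitrary proper convex lsc $f$: the passage via Fenchel--Moreau and the Minkowski gauge in (i)$\Rightarrow$(ii) plays the role of the separation argument, while in (ii)$\Rightarrow$(i) the normalisation $f\ge 0$, $f(0)=0$ together with the convexity bound $f(x/t)\le 1$ recovers enough of the positive-homogeneous structure to imitate the seminorm case. Your remark about realising the absolutely convex hull $D$ as the continuous image of the compact product $A\times A\times[0,1]$, thereby avoiding an appeal to Krein's theorem (which would need an unwarranted completeness hypothesis on $Y$), is well taken and is exactly the point where a careless write-up of the general case could go wrong.
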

\begin{rmk}
In the next proposition, we will apply this result for $f=\phi_\mathcal{N}$, a proper and lower semi-continuous seminorm. In this case, the proof of Proposition \ref{mackey} is straightforward. Indeed, $\phi_\mathcal{N}^* = I_{\{\rho_S(u^*) \leq 1\}}$ where $I$ is the indicator function and $S =\{u: \phi_\mathcal{N}(u) \leq 1\}$. Hence, if $S$ is weakly compact, then thanks to the definition of the Mackey topology, $\phi^*_\mathcal{N}$ is continuous in zero.

Conversely, if $\phi_\Na^*$ is continuous in zero, then there exist
absolutely convex, weakly compact sets $A_1,\ldots,A_n \subset X_\Na$
and $\varepsilon_1, \ldots, \varepsilon_n > 0$ such that
$\rho_{A_i}(u^*) \leq \varepsilon_i$ for $i=1,\ldots,n$ implies
$\rho_S(u^*) \leq 1$. This, however, means that
$S \subset \varepsilon_1^{-1} A_1 + \ldots + \varepsilon_n^{-1} A_n$.
Indeed, if this were not the case, one could separate a $u \in S$ from
the absolutely convex and weakly compact set
$\varepsilon_1^{-1} A_1 + \ldots + \varepsilon_n^{-1} A_n$ by a
$u^* \in X_\Na^*$ such that $\langle u^*, u\rangle > 1$ as well as
$\langle u^*, \sum_{i=1}^n \varepsilon_i^{-1} u_i \rangle \leq 1$ for
$u_i \in A_i$. In particular, $\rho_{A_i}(u^*) \leq \varepsilon_i$ for
each $i=1,\ldots,n$ leading to the contradiction
$\rho_{S}(u^*) \leq 1$. Due to lower semi-continuity of $\phi_\Na$,
$S$ is a closed convex subset of a weakly compact set and hence weakly compact. By
positive homogeneity of $\phi_\Na$, the sets $S^-(\phi_\Na, \alpha)$
are compact for all $\alpha \in \R$.
\end{rmk}

For the following, it is convenient to define the linear operator $\mathcal{A}_\mathcal{N}: X_{\mathcal{N}} \rightarrow H_\mathcal{N}:= H / \mathcal{A}(\mathcal{N})$ as
\begin{equation}
\mathcal{A}_\mathcal{N}u_{\mathcal{N}} = \mathcal{A} u + \mathcal{A}(\mathcal{N})\, .
\end{equation}

\begin{rmk}\label{continuity}
Notice that $\mathcal{A}_\mathcal{N}$ is well-defined in $X_\mathcal{N}$ as given $u^1, u^2 \in X$ with $u^1 - u^2 \in \mathcal{N}$  it also holds that $\mathcal{A}u^1 - \mathcal{A}u^2 \in \mathcal{A}(\mathcal{N})$. Moreover, it is continuous in $X_{\mathcal{N}}$; indeed, the following diagram commutes:
\begin{equation*}
\xymatrix{
X  \ar[d]_{\pi_{\mathcal{N}}} \ar[r]^{\mathcal{A}} & H \ar[d]^{\pi_{\A(\Na)}}\\
X_{\mathcal{N}}  \ar[r]^{\mathcal{A}_\mathcal{N}} & H_\mathcal{N}}
\end{equation*} 
and the projections on the quotients are continuous and open.
\end{rmk}
We denote by $\mathcal{A}_\mathcal{N}^*: H_\mathcal{N} \rightarrow X_{\mathcal{N}}^*$ its adjoint that has finite-dimensional image and is hence continuous for each topology that makes $X_{\mathcal{N}}^*$ a topological vector space.
Given $w\in H$, we denote by $w_{\mathcal{N}} := w + \A(\mathcal{N})$
an element of $H_\mathcal{N}$.

We can equivalently write
\begin{equation}
\mathscr{J}(u_\mathcal{N}) = \phi_\mathcal{N}(u_{\mathcal{N}}) + F_\mathcal{N}(\mathcal{A}_\mathcal{N}(u_{\mathcal{N}}))\, ,
\end{equation}
where
\begin{displaymath}
F_\mathcal{N}(w_\mathcal{N}) = \inf_{\eta \in \mathcal{A}(\mathcal{N})} F(w + \eta)\,. 
\end{displaymath}
\begin{rmk}\label{plc}
Notice again that $F_\mathcal{N}$ is proper, convex and, applying Lemma~\ref{coercive_quotient} %
with $f = F$ and $\mathcal{M} = \mathcal{A}(\mathcal{N})$, it is also 
coercive in $H_\mathcal{N}$.
\end{rmk}
We now derive %
optimality conditions for Problem~\ref{auxpro}. For that purpose,
recall that given a functional $f: Y \to (-\infty,+\infty]$ on a real
locally convex space $Y$, the element $x^* \in Y^*$ is called a 
\emph{subgradient} of $f$ in $x \in Y$, if 
\[
  f(x) + \langle x^*, y - x \rangle \leq f(y)
\]
for each $y \in Y$. In this case, we denote $x^* \in \partial f(x)$.

\begin{prop}[Optimality conditions]\label{opt}
It holds that $\overline{u}_\mathcal{N}\in X_\mathcal{N}$ is a minimizer for Problem \ref{auxpro} if and only if there exists $\overline{w}_{\mathcal{N}} \in H_\mathcal{N}$ 
such that
\begin{enumerate}[label=\roman*)]
\item\label{opt_i} $\mathcal{A}_\mathcal{N}^* \overline{w}_{\mathcal{N}} \in \partial \phi_{\mathcal{N}} (\overline{u}_\mathcal{N})$,
\item\label{opt_ii} $\mathcal{A}_\mathcal{N} \overline{u}_\mathcal{N} \in \partial F_\mathcal{N}^* (-\overline{w}_{\mathcal{N}})$.
\end{enumerate}
\end{prop}

\begin{proof}
  We start with transforming Problem \ref{auxpro} into the problem
  \eqref{dual_problem} for which the dual problem in terms of
  Fenchel--Rockafellar duality (see Remark III.4.2 in \cite{CAAV}) will
  turn out to be equivalent to the original problem:
  \begin{equation}
    \label{dual_problem}\tag{$\mathscr{P}^*$}
    \inf_{w_\mathcal{N} \in H_\mathcal{N}} \Big[\phi_{\mathcal{N}}^*(\mathcal{A}_\mathcal{N}^* w_\mathcal{N}) + F_\mathcal{N}^* (-w_\mathcal{N}) \Big].
  \end{equation}
  We now endow $X_\mathcal{N}^*$ with the Mackey topology $\tau(X^*_\mathcal{N}, X_\mathcal{N})$. 
  Notice that $\phi_{\mathcal{N}}^*$ is convex, proper and weakly* lower semi-continuous and hence it is lower semi-continuous with respect to $\tau(X^*_\mathcal{N}, X_\mathcal{N})$ as well. As previously mentioned, the adjoint $\mathcal{A}_\mathcal{N}^*: H_\mathcal{N} \rightarrow X_\mathcal{N}^*$ is linear and continuous with respect to any vector space topology and in particular, the weak* topology of $X^*_\mathcal{N}$ as well as the Mackey topology $\tau(X^*_\mathcal{N}, X_\mathcal{N})$. %
  Moreover, thanks to Remark \ref{plc}, $F_\mathcal{N}^*$ is convex, proper and lower semi-continuous in $H_\mathcal{N}$.
  
  Notice that as $\phi_{\mathcal{N}}$ satisfies Hypothesis
  \ref{H1} (that implies in particular that the sublevel sets
  of $\phi_{\mathcal{N}}$ are weakly compact), using
  Proposition~\ref{mackey}, we have that $\phi^*_\mathcal{N}$ is
  continuous in
  zero. %
  Hence, applying Theorem III.4.1 in \cite{CAAV}, the
  problem~\eqref{dual_problem} has zero gap to its dual which
  coincides, as the dual space of $X_\Na^*$ is $X_\Na$ and
  $\phi_\Na^{**} = \phi_\Na$ as well as $F_\Na^{**} = F_\Na$, with
  Problem~\ref{auxpro}, i.e.,
  \begin{displaymath}
    \inf_{u_\mathcal{N} \in X_\mathcal{N}} \mathscr{J} (u_\mathcal{N}) = 
    \inf_{u_\Na \in X_\Na} \Big[ \phi_\Na(u_\Na) + F_\Na(\A_\Na u_\Na) \Big] =
    -\inf_{w_\mathcal{N} \in H_\mathcal{N}} \Big[\phi_{\mathcal{N}}^*(\mathcal{A}_\mathcal{N}^* w_\mathcal{N}) + F_\mathcal{N}^* (-w_\mathcal{N})\Big] \,.
  \end{displaymath}
  In order to establish the optimality conditions, we want to prove now that the problem~\eqref{dual_problem} has a minimizer, since the existence of a minimizer for Problem~\ref{auxpro} has already been established in Theorem~\ref{auxi}. Notice that at this point there is no more need to consider the Mackey topology on $X^*_\mathcal{N}$ and we can use the weak* topology on $X^*_\mathcal{N}$.

The functional $\phi_{\mathcal{N}}^* \circ \mathcal{A}_\mathcal{N}^* + F_\mathcal{N}^* \circ (-\id)$ is convex, proper and lower semi-continuous.
We aim at showing that it is also coercive. It is enough to prove that $\phi_{\mathcal{N}}^* \circ \mathcal{A}_\mathcal{N}^*$ is the indicator function of a compact convex set as $F_\Na^*$ is proper, convex and lower semi-continuous.

Notice that
\begin{eqnarray}
(\phi_{\mathcal{N}}^* \circ \mathcal{A}_\mathcal{N}^*)(w_\mathcal{N}) &=& \sup_{u_\mathcal{N} \in X_\mathcal{N}} \langle u_\mathcal{N}, \mathcal{A}_\mathcal{N}^* w_\mathcal{N}\rangle - \phi_{\mathcal{N}}(u_\mathcal{N}) \nonumber \\
&=& \sup_{u_\mathcal{N} \in X_\mathcal{N}}  \langle \mathcal{A}_\mathcal{N} u_\mathcal{N},   w_\mathcal{N}\rangle - \phi_{\mathcal{N}}(u_\mathcal{N}) \nonumber \\
& = & \sup_{v_\mathcal{N}\in H_\mathcal{N}, \mathcal{A}_\mathcal{N} u_\mathcal{N} = v_\mathcal{N}}  \langle v_\mathcal{N},   w_\mathcal{N}\rangle - \phi_{\mathcal{N}}(u_\mathcal{N}) \nonumber \\
& = & \sup_{v_\mathcal{N}\in H_\mathcal{N}} \Big[ \langle v_\mathcal{N},   w_\mathcal{N}\rangle - \inf_{\mathcal{A}_\mathcal{N} u_\mathcal{N} = v_\mathcal{N}} \phi_{\mathcal{N}}(u_\mathcal{N})\Big] \nonumber \\
& = & \left[v_\Na \mapsto \inf_{\mathcal{A}_{\mathcal{N}}u_\mathcal{N} = v_\mathcal{N}}\phi_{\mathcal{N}}(u_\mathcal{N})\right]^*(w_\Na) \label{double} \, .
\end{eqnarray}
We prove that the map $G:H_\mathcal{N}\rightarrow (-\infty,+\infty]$ defined as
\begin{displaymath}
G(v_\mathcal{N}) = \inf_{\mathcal{A}_{\mathcal{N}}u_\mathcal{N} = v_\mathcal{N}}\phi_{\mathcal{N}}(u_\mathcal{N})
\end{displaymath}
is convex, proper and coercive %
in $H_\mathcal{N}$. For this purpose, notice that, as a consequence of Hypothesis \ref{H0} and the definition of $\mathcal{A}_\mathcal{N}$, we have
\begin{displaymath}
\mathcal{A}_\mathcal{N}(\mbox{dom}\, \phi_\mathcal{N}) = \mathcal{A}_\mathcal{N} (\mbox{dom}\, \phi + \mathcal{N}) = \mathcal{A}( \mbox{dom}\,\phi) + \mathcal{A}(\mathcal{N}) = H + \mathcal{A}(\mathcal{N}) = H_\mathcal{N} \,.
\end{displaymath}
Hence, $\mbox{dom}\, G = H_\mathcal{N}$ and $\A_\Na$ is
surjective. With $\A_0 : X_\Na/\ker(\A_\Na) \to H_\Na$ defined via
$\A_0(u_\Na + \ker(A_\Na)) = \A_\Na u_\Na$ which is bijective and
hence, continuously invertible, we can write
$G(v_\Na) = \phi_0(\A_0^{-1} v_\Na)$ where
\[
  \phi_0(u_\Na + \ker(\A_\Na)) = \inf_{\psi_\Na \in \ker(\A_\Na)}
  \phi_\Na(u_\Na + \psi_\Na)\,.
\]
By Hypothesis \ref{H1}, $\phi_\Na$ is coercive, so
Lemma~\ref{coercive_quotient} yields that $\phi_0$ is coercive. As
$\phi_\Na$ is a seminorm, $\phi_0$ is proper and convex. It follows
that $G$ is proper, convex and lower semi-continuous.  As
$\mbox{dom}\, G = H_\mathcal{N}$, convexity implies that $G$ is
continuous everywhere in $H_\Na$ and in particular, in
zero. Consequently, $G^* = \phi_\Na^* \circ \A_\Na^*$ is coercive.
It follows that $G^*$ is the indicator function of a compact convex set as $G$ is one-homogeneous.
Hence, applying the direct method of calculus of variations in $H_\mathcal{N}$ we infer that Problem~\eqref{dual_problem} has a minimizer that we denote by $\overline{w}_{\mathcal{N}} \in H_\mathcal{N}$. 

We now derive the optimality conditions for this problem.
Applying Proposition III.4.1 in \cite{CAAV} we infer that
\begin{displaymath}
\phi_\mathcal{N}(\overline u_\mathcal{N}) + F_\mathcal{N}(\mathcal{A}_\mathcal{N} \overline u_\mathcal{N})+ \phi^*_\mathcal{N}(\mathcal{A}^*_\mathcal{N} \overline w_\mathcal{N}) + F^*_\mathcal{N}(-\overline w_\mathcal{N})= 0 = \langle  \overline u_\mathcal{N} , \mathcal{A}^*_\mathcal{N} \overline w_\mathcal{N}\rangle + \langle \mathcal{A}_\mathcal{N} \overline u_\mathcal{N}, - \overline w_\mathcal{N}\rangle\,.
\end{displaymath}
Therefore, we get with the help of the Fenchel inequality that
\begin{equation}\label{fas}
\phi_\mathcal{N}(\overline u_\mathcal{N}) + \phi^*_\mathcal{N}(\mathcal{A}^*_\mathcal{N} \overline w_\mathcal{N}) - \langle  \overline u_\mathcal{N} , \mathcal{A}^*_\mathcal{N} \overline w_\mathcal{N}\rangle = 0  \qquad \mbox{ and }
\end{equation}
\begin{equation}\label{fas2}
F_\mathcal{N}(\mathcal{A}_\mathcal{N} \overline u_\mathcal{N})  + F^*_\mathcal{N}(-\overline w_\mathcal{N}) + \langle \mathcal{A}_\mathcal{N} \overline u_\mathcal{N},  \overline w_\mathcal{N}\rangle = 0\, .
\end{equation}
Finally, Equations \eqref{fas} and \eqref{fas2} are equivalent to
\begin{equation}\label{other}
\mathcal{A}_\mathcal{N}^* \overline{w}_{\mathcal{N}} \in \partial \phi_{\mathcal{N}} (\overline{u}_\mathcal{N}) \quad \mbox{ and  } \ \ 
\mathcal{A}_\mathcal{N} \overline u_\mathcal{N} \in \partial F_\mathcal{N}^* (-\overline w_{\mathcal{N}})
\end{equation}
as we wanted to prove.

Vice versa, if there exist $\overline{w}_{\mathcal{N}}$ and $\overline u_\mathcal{N}$ that satisfy the optimality conditions \ref{opt_i} and \ref{opt_ii}, applying again Proposition III.4.1 in \cite{CAAV} we deduce that $\overline u_\mathcal{N}$ is a minimizer of Problem \ref{auxpro} and 
$\overline{w}_{\mathcal{N}}$ is a minimizer of~\eqref{dual_problem}.
\end{proof}

\begin{rmk}\label{newopt}
Defining the set
\begin{equation*}
\mathcal{K}:= \{u_\mathcal{N}^* \in X_\mathcal{N}^* : \langle u_\mathcal{N}^* , u_\mathcal{N} \rangle \leq \phi_\mathcal{N}(u_\mathcal{N}) \mbox{ for every } u_\mathcal{N} \in X_\mathcal{N}\}\, ,
\end{equation*}
condition \ref{opt_i} of Proposition \ref{opt} is equivalent to
\begin{enumerate}[label=\roman*)]
\item\label{alt_opt_i} $\mathcal{A}_\mathcal{N}^* \overline w_{\mathcal{N}} \in \mathcal{K}$,
\item\label{alt_opt_ii} $\langle \mathcal{A}_\mathcal{N}^* \overline w_{\mathcal{N}}, \overline u_\mathcal{N}\rangle = \phi_{\mathcal{N}}(\overline u_\mathcal{N})$.
\end{enumerate}
\end{rmk}

\section{Abstract main result: existence of a sparse minimizer}

Define
\begin{equation*}
B := \{u \in X : \phi(u) \leq 1\}\,
\end{equation*}
and $B_\mathcal{N} := B + \mathcal{N} \subset X_\Na$.
\begin{defi}[Extremal points]
Given a convex set $K$ of a locally convex space we define the extremal points of $K$ as the points $k\in K$ such that if there exists $t \in (0,1)$, $k_1,k_2 \in K$ such that
\begin{displaymath}
k = t k_1 + (1-t)k_2\, ,
\end{displaymath}
then $k=k_1=k_2$.

The set of extremal points of $K$ will be denoted by $Ext(K)$.
\end{defi}

First we need a lemma about the behaviour of extremal points under a linear mapping.

\begin{lemma}\label{fund}
Let $K$ be a convex set in a locally convex space $X$. Given $Y$ a real topological vector space and a linear map $L:X \rightarrow Y$ the following statements hold:
\begin{enumerate}[label=\roman*)]
\item\label{fund_i} If $L$ is continuous and $K$ is compact, then $Ext(LK) \subset LExt(K)$. 
\item\label{fund_ii} If $L$ is injective, then $Ext(LK) = LExt(K)$\,.
\end{enumerate} 
\end{lemma}
\begin{proof}
To prove \ref{fund_i} let us consider $k \in K$ such that $Lk$ is an extremal point of $LK$. We want to show that there exists $\overline k \in Ext(K)$ such that $Lk = L\overline k$ which proves the first claim.

Consider the set $(k + \ker L) \cap K$. As this is a non-empty compact convex set in a locally convex space ($\ker L$ is closed by the continuity of $L$), by the Krein--Milman theorem, it admits an extremal point denoted by $\overline k \in (k + \ker L) \cap K$. In order to conclude the proof we need to prove that $\overline k \in Ext(K)$.
Assume the convex combination
\begin{equation}\label{ah}
\overline k = t k_1 + (1-t) k_2\, 
\end{equation}
for $k_1,k_2 \in K$ and $t\in (0,1)$.
Then applying the linear operator $L$ we obtain that
\begin{equation*}
L\overline k = t Lk_1 + (1-t) Lk_2\, .
\end{equation*}
As $Lk \in Ext(LK)$ and $L\overline k = Lk$ we infer that $L\overline k = Lk_1 = Lk_2$ and so $k_1,k_2 \in (k + \ker L) \cap K$. From \eqref{ah} and the extremality of $\overline k$ it follows that $\overline k = k_1= k_2$. 

Let us prove \ref{fund_ii}. To show that $ Ext(LK) \subset LExt(K)$ take $Lk \in Ext(LK)$ and assume the convex combination
\begin{equation*}
k = t k_1 + (1-t) k_2\, 
\end{equation*}
for $k_1, k_2 \in K$, $t\in (0,1)$. Applying $L$ to both sides and using that $Lk \in Ext(LK)$ we obtain that $Lk = Lk_1 = Lk_2$. Then the injectivity of $L$ implies that $k=k_1=k_2$, thus $k \in Ext(K)$.

To prove the opposite inclusion let us consider $k \in Ext(K)$. 
Assume the convex combination
\begin{equation*}
Lk = t Lk_1 + (1-t) Lk_2\, 
\end{equation*}
for $k_1,k_2 \in K$ and $t\in (0,1)$. As $L$ is injective  and using that $k \in Ext(K)$ we conclude that $k = k_1= k_2$ and hence $Lk = Lk_1= Lk_2$. 
\end{proof}

We are now in the position to prove our main theorem.

\begin{thm}\label{maint}
There exists $\overline u \in X$, a minimizer of Problem \ref{problem} 
with the representation:
\begin{equation}\label{null}
\overline{u} = \overline \psi + \sum_{i=1}^p \gamma_i u_i \, ,
\end{equation}
where $\overline \psi \in \mathcal{N}$, 
$p\leq \mbox{dim}\,H_\mathcal{N}$, $u_i + \mathcal{N} \in \mbox{Ext}(B_\mathcal{N})$ and $\gamma_i >0$ with $\sum_{i=1}^p \gamma_i = \phi(\overline u)$.
\end{thm}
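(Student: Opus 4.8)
The plan is to start from a minimizer $\overline u_\Na$ of the quotient problem (Problem~\ref{auxpro}), which exists by Proposition~\ref{auxi}, together with the dual certificate $\overline w_\Na \in H_\Na$ provided by Proposition~\ref{opt}. The key structural observation is that condition ii) in the form given in Remark~\ref{newopt}, namely $\langle \A_\Na^* \overline w_\Na, \overline u_\Na \rangle = \phi_\Na(\overline u_\Na)$ together with $\A_\Na^* \overline w_\Na \in \mathcal{K}$, means that the minimality of $\overline u_\Na$ depends \emph{only} on the value $\A_\Na \overline u_\Na \in H_\Na$ and on $\phi_\Na(\overline u_\Na)$: indeed, any $v_\Na \in X_\Na$ with $\A_\Na v_\Na = \A_\Na \overline u_\Na$ and $\phi_\Na(v_\Na) \le \phi_\Na(\overline u_\Na)$ also satisfies both optimality conditions with the same $\overline w_\Na$, hence is again a minimizer of Problem~\ref{auxpro}. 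So it suffices to produce such a $v_\Na$ with the desired sparse representation, and then lift it back to $X$ via Theorem~\ref{ex}, which adds the term $\overline\psi \in \Na$.

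Next I would work inside the compact convex set $t B_\Na \subset X_\Na$, where $t := \phi_\Na(\overline u_\Na) = \phi(\overline u)$ (if $t = 0$ then $\overline u_\Na = 0$ and there is nothing to prove, taking $p = 0$). Apply the continuous linear map $\A_\Na : X_\Na \to H_\Na$ to get the compact convex set $\A_\Na(t B_\Na) \subset H_\Na$ in the finite-dimensional space $H_\Na$. Since $\overline u_\Na \in t B_\Na$, the point $\A_\Na \overline u_\Na$ lies in $\A_\Na(t B_\Na)$; by Minkowski–Carathéodory in $H_\Na$ (dimension $N_\Na := \dim H_\Na$), it is a convex combination of at most $N_\Na + 1$ — and in fact, since it need not be extremal, at most $N_\Na$ suffices after a standard argument, but let me not over-optimize — extremal points of $\A_\Na(t B_\Na)$. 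By Lemma~\ref{fund}~i) (with $L = \A_\Na$, $K = t B_\Na$ compact), each such extremal point is the image under $\A_\Na$ of an extremal point of $t B_\Na$; and $\mathrm{Ext}(t B_\Na) = t\,\mathrm{Ext}(B_\Na)$ by positive homogeneity. Writing $\A_\Na \overline u_\Na = \sum_{i=1}^p \lambda_i \A_\Na(t u_i^{(0)})$ with $\lambda_i > 0$, $\sum \lambda_i = 1$, $u_i^{(0)} + \Na \in \mathrm{Ext}(B_\Na)$, set $v_\Na := \sum_{i=1}^p \lambda_i t\, (u_i^{(0)} + \Na) = \sum_{i=1}^p \gamma_i (u_i^{(0)} + \Na)$ with $\gamma_i := \lambda_i t > 0$, so $\sum \gamma_i = t$. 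Then $\A_\Na v_\Na = \A_\Na \overline u_\Na$ by construction, and $\phi_\Na(v_\Na) \le \sum_i \gamma_i \phi_\Na(u_i^{(0)} + \Na) \le \sum_i \gamma_i = t = \phi_\Na(\overline u_\Na)$ by subadditivity and positive homogeneity of the seminorm $\phi_\Na$. By the reduction above, $v_\Na$ is a minimizer of Problem~\ref{auxpro}.

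Finally, I would lift: pick representatives $u_i \in X$ of $u_i^{(0)} + \Na$, so $\phi(u_i) = \phi_\Na(u_i^{(0)}+\Na)$ and actually one can check $\phi(u_i) \le 1$ may be arranged, but what matters is that $u_i + \Na \in \mathrm{Ext}(B_\Na)$. Set $u^{\sharp} := \sum_{i=1}^p \gamma_i u_i \in X$; then $u^\sharp + \Na = v_\Na$ is a minimizer of Problem~\ref{auxpro}, and $\phi(u^\sharp) = \phi_\Na(v_\Na) \le t$. Applying Theorem~\ref{ex} to the minimizer $v_\Na$ of Problem~\ref{auxpro} produces $\overline\psi \in \Na$ such that $\overline u := u^\sharp + \overline\psi = \overline\psi + \sum_{i=1}^p \gamma_i u_i$ is a minimizer of Problem~\ref{problem}; moreover $\phi(\overline u) = \phi(u^\sharp)$. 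A last point: to get $\sum_i \gamma_i = \phi(\overline u)$ \emph{exactly} (not just $\ge$), note that if $\phi(\overline u) < t$ then replacing $\overline u$ by it would contradict $\phi_\Na(\overline u_\Na) = t$ being the minimal norm value consistent with optimality — more precisely, the optimality conditions force $\phi_\Na(v_\Na) = \langle \A_\Na^* \overline w_\Na, v_\Na\rangle$, and combined with $\phi_\Na(v_\Na) \le \sum \gamma_i \le t$ and $\phi_\Na(\overline u_\Na) = t$ one deduces $\phi_\Na(v_\Na) = t$, hence each inequality above is an equality and $\sum_i \gamma_i = \phi_\Na(v_\Na) = \phi(\overline u)$.

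The main obstacle I anticipate is the clean justification that minimality of the quotient problem is detected purely through the pair $(\A_\Na u_\Na, \phi_\Na(u_\Na))$ — i.e., the rigorous use of the dual certificate from Proposition~\ref{opt}/Remark~\ref{newopt} to transfer optimality from $\overline u_\Na$ to the constructed $v_\Na$ — and, relatedly, pinning down the exact equality $\sum_i \gamma_i = \phi(\overline u)$ rather than an inequality. The Krein–Milman/Carathéodory step and Lemma~\ref{fund} are routine once the setting is finite-dimensional after applying $\A_\Na$; the compactness needed for Lemma~\ref{fund}~i) is exactly Hypothesis [\textbf{H1}].
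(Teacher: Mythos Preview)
Your proposal follows essentially the same route as the paper: start from a minimizer of the quotient problem and its dual certificate (Propositions~\ref{auxi} and~\ref{opt}), push $\overline u_\Na$ into $H_\Na$ via $\A_\Na$, apply Krein--Milman and Carath\'eodory to $\A_\Na B_\Na$ (or equivalently $\A_\Na(tB_\Na)$), lift the extremal points back via Lemma~\ref{fund}~i), form $v_\Na = \sum_i \gamma_i (u_i + \Na)$, verify optimality of $v_\Na$ through the dual certificate, and finally lift to $X$ via Theorem~\ref{ex}. Your handling of the equality $\sum_i \gamma_i = \phi(\overline u)$ is correct and matches the sandwich argument the paper uses implicitly.

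There is, however, one genuine gap: the bound $p \le \dim H_\Na$ (as opposed to $\dim H_\Na + 1$) is part of the statement and requires an argument you skip. Your remark ``since it need not be extremal, at most $N_\Na$ suffices after a standard argument'' is not a valid justification---Carath\'eodory gives $\dim H_\Na + 1$ points regardless of whether the target point is extremal. The paper obtains the sharper bound precisely via the dual certificate: one shows that each $\A_\Na v_i$ appearing in the decomposition satisfies $\langle \A_\Na v_i, \overline w_\Na\rangle = 1$ (this follows from $\sum_i \alpha_i \langle \A_\Na v_i, \overline w_\Na\rangle = 1$ together with $\langle \A_\Na v_i, \overline w_\Na\rangle \le 1$ from $\A_\Na^* \overline w_\Na \in \mathcal K$). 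Hence all the $\A_\Na v_i$ lie on an affine hyperplane of dimension $\dim H_\Na - 1$, and reapplying Carath\'eodory there (assuming $p$ minimal) yields $p \le \dim H_\Na$. You have all the ingredients for this step already in hand; it just needs to be written down.
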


\begin{proof}
We apply Proposition \ref{auxi} and Proposition \ref{opt} to find $\overline u_\mathcal{N} \in X_\mathcal{N}$ a minimizer of Problem \ref{auxpro} and $\overline w_{\mathcal{N}} \in H_\mathcal{N}$ such that properties \ref{opt_i} and \ref{opt_ii} in Proposition \ref{opt} hold. If $\phi_{\mathcal{N}}(\overline u_\mathcal{N}) = 0$, then 
applying Theorem \ref{ex}, we infer that there exists $\overline \psi \in \mathcal{N}$ such that $\overline \psi$ is a minimizer of Problem \ref{problem}. Therefore, Equation \eqref{null} holds with $p=0$.
Hence, we suppose without loss of generality that $\overline u_\Na \notin \mathcal{N}$, i.e. $\phi_\Na(\overline u_\Na) > 0$. 

Notice that
$\mathcal{A}_\mathcal{N} B_\mathcal{N} = \mathcal{A} B + \mathcal{A}(\mathcal{N}) \subset H_\mathcal{N}$ and
\begin{equation}
B_\mathcal{N} = \{u_{\mathcal{N}}\in X_\mathcal{N} : \phi_{\mathcal{N}}(u_{\mathcal{N}}) \leq 1\} \, .
\end{equation}
Hence, using Hypothesis \ref{H1} we infer that $B_\mathcal{N}$ is compact and thanks to Remark \ref{continuity} we have that $\mathcal{A}_\mathcal{N} B_\mathcal{N}$ is compact in $H_\mathcal{N}$ as well.
As $\frac{1}{\phi_\mathcal{N}(\overline u_\mathcal{N})}\mathcal{A}_\mathcal{N} \left(u_\mathcal{N}\right) \in \mathcal{A}_\mathcal{N} B_\mathcal{N} \subset H_\mathcal{N}$,
by the Krein--Milman theorem and Carath\'eodory theorem, we have that 
\begin{equation}\label{beginn}
\frac{1}{\phi_\mathcal{N}(\overline u_\mathcal{N})}\mathcal{A}_\mathcal{N} \left(\overline u_\mathcal{N}\right) = \sum_{i=1}^p \alpha_i w_i\, ,
\end{equation}
where $\alpha_i > 0$ for $i=1,\ldots,p$, $\sum_{i=1}^p \alpha_i = 1$, $w_i \in Ext(\mathcal{A}_\mathcal{N} B_\mathcal{N})$ and $p \leq \dim H_\mathcal{N} +1$. We can assume that $p$ is minimal, in the sense that it is the minimal number such that a decomposition like \eqref{beginn} holds.

Thanks to part \ref{fund_i} in Lemma \ref{fund} we have that there exist $v_i \in Ext(B_\mathcal{N})$ such that
\begin{equation}\label{representation0}
\frac{1}{\phi_\mathcal{N}(\overline u_\mathcal{N})}\mathcal{A}_\mathcal{N} \left(\overline u_\mathcal{N}\right) = \sum_{i=1}^p \alpha_i \mathcal{A}_\mathcal{N} v_i\, .
\end{equation}
We want to prove that $p\leq \dim H_\mathcal{N}$.
We claim that for every $i=1,\ldots,p$ we have that
\begin{equation}\label{claim}
\langle \mathcal{A}_\mathcal{N} v_i,  \overline w_{\mathcal{N}} \rangle = 1\, . 
\end{equation}
Indeed, thanks to Remark \ref{newopt} we have
\begin{eqnarray*}
1 &=& \langle \mathcal{A}_\mathcal{N}^* \overline w_{\mathcal{N}} , \frac{\overline u_\mathcal{N}}{\phi_\mathcal{N}(\overline u_\Na)}\rangle = \langle \overline w_{\mathcal{N}} , \frac{\mathcal{A}_\mathcal{N}(\overline u_\mathcal{N})} {\phi_\mathcal{N}(\overline u_\Na)}\rangle \\
 &=&  \sum_{i=1}^p \alpha_i \langle  \overline w_{\mathcal{N}} , \mathcal{A}_\mathcal{N} v_i\rangle \, .
\end{eqnarray*}
Moreover for all $u_\mathcal{N} \in B_\mathcal{N}$ we have $\langle \mathcal{A}_\mathcal{N} u_\mathcal{N} , \overline w_{\mathcal{N}}  \rangle = \langle u_\mathcal{N} , \mathcal{A}_\mathcal{N}^* \overline w_{\mathcal{N}}  \rangle  \leq \phi_{\mathcal{N}}(u_\mathcal{N}) \leq 1 $. Therefore,
as $v_i \in B_\mathcal{N}$ for every $i=1,\dots,p$, %
the claim stated in Equation \eqref{claim} follows.
Hence, $\{\mathcal{A}_\mathcal{N} v_i\}_i$ is contained in a $\dim H_\mathcal{N}-1$ dimensional Hilbert space (obviously, $\overline w_\Na \neq 0$ in this case). Then, applying Carath\'eodory's theorem again, we deduce that $p\leq \dim H_\mathcal{N}$ as a consequence of the minimality of $p$.

Define then
\begin{equation}\label{representation1}
\overline v_\mathcal{N} = \phi_\mathcal{N}(\overline u_\Na)\sum_{i=1}^p \alpha_i v_i = \sum_{i=1}^p \gamma_i v_i\, ,
\end{equation}
where $\gamma_i = \phi_\mathcal{N}(\overline u_\mathcal{N}) \alpha_i$.
From \eqref{beginn} and the linearity of $\mathcal{A}_\mathcal{N}$ we infer that 
\begin{equation}\label{oo}
\mathcal{A}_\mathcal{N} \overline v_\mathcal{N} = \mathcal{A}_\mathcal{N} \overline u_\mathcal{N} 
\end{equation}
and in particular
$\mathcal{A}_\mathcal{N} \overline v_\mathcal{N} \in \partial F_\mathcal{N}^* (-\overline w_{\mathcal{N}})$. In order to deduce that $\overline v_\mathcal{N}$ is a minimizer for Problem \ref{auxpro} using Proposition \ref{opt}, it remains to prove that
\begin{displaymath}
\phi_{\mathcal{N}}(\overline v_\mathcal{N}) = \langle \mathcal{A}_\mathcal{N}^* \overline w_{\mathcal{N}} , \overline v_\Na \rangle \, .
\end{displaymath}
Indeed, using $\phi_\Na(v_i) \leq 1$ for each $i=1,\ldots,p$, Equation
\eqref{oo} and Remark \ref{newopt}, we obtain
\begin{eqnarray*}
\phi_{\mathcal{N}}(\overline v_\mathcal{N}) &=& \phi_{\mathcal{N}}(\overline u_\mathcal{N}) \phi_{\mathcal{N}}\Big(\sum_{i=1}^p \alpha_i v_i\Big) \leq \phi_{\mathcal{N}}\left(\overline u_\mathcal{N}\right) \sum_{i=1}^p \alpha_i \phi_{\mathcal{N}}(v_i)\\
&\leq&\langle \mathcal{A}_\mathcal{N}^* \overline w_{\mathcal{N}} ,\overline u_\mathcal{N} \rangle = \langle \overline w_{\mathcal{N}} , \mathcal{A}_\mathcal{N} \overline u_\mathcal{N}\rangle \\
&=& \langle \overline w_{\mathcal{N}} , \mathcal{A}_\mathcal{N} \overline v_\mathcal{N}\rangle = \langle \mathcal{A}_\mathcal{N}^* \overline w_{\mathcal{N}} , \overline v_\mathcal{N} \rangle \, .
\end{eqnarray*}
On the other hand as $\mathcal{A}_\mathcal{N}^* \overline w_{\mathcal{N}} \in \mathcal{K}$ (see Remark \ref{newopt}) we have also that
\begin{displaymath}
\langle \overline v_\Na ,\mathcal{A}_\mathcal{N}^* \overline w_{\mathcal{N}} \rangle \leq \phi_{\mathcal{N}}(\overline v_\mathcal{N})\, .
\end{displaymath}
Thus, as a consequence of Proposition \ref{opt}, $\overline v_\mathcal{N}$ is a minimizer for Problem \ref{auxpro}. To conclude, notice that there exist $u_1,\ldots, u_p \in X$ such that $u_i + \mathcal{N} = v_i$
and therefore from \eqref{representation1} we have
\begin{displaymath}
\overline v_\mathcal{N} = \sum_{i=1}^p \gamma_i (u_i + \mathcal{N}) = \sum_{i=1}^p \gamma_i u_i + \mathcal{N}\,.
\end{displaymath}
Then applying Theorem \ref{ex} we infer that there exists $\overline u\in X$ such that $\overline u + \mathcal{N} = \overline v_\mathcal{N}$ and $\overline u$ is a minimizer of Problem \ref{problem}. From the equality
\begin{displaymath}
\overline u + \mathcal{N} = \sum_{i=1}^p \gamma_i u_i + \mathcal{N}
\end{displaymath} 
we obtain the existence of a $\overline \psi \in \Na$ such
that~\eqref{null} holds.
\end{proof}

\begin{rmk}
Let us point out similarities and differences to the work \cite{chambollerepresenter}, where a theorem similar to Theorem \ref{maint} has been shown. First, instead of seminorms, \cite{chambollerepresenter} deals with general convex regularizers. Moreover, in \cite{chambollerepresenter}, the existence of minimizers for the considered variational inverse problem is assumed \emph{a priori}, with the goal of disentangling the main result (which is purely geometric) from the topology chosen on $X$. In contrast, we make suitable assumptions that ensure existence of minimizers for the inverse problem and that the set of extremal points of the balls of the regularizer is non-empty. In such a way, we provide an operative result with hypotheses that can be easily checked.

\medskip

It is worth to notice that both our result and \cite{chambollerepresenter} do not provide a sparse representation for \emph{every} minimizer of the variational inverse problem. However, the points of view are complementary. In \cite{chambollerepresenter}, the authors characterize,
with a help of a theorem by Dubins and Klee \cite{dubins, klee},
the minimizers belonging to the finite-dimensional faces of the set of the solutions (we refer to \cite{chambollerepresenter} for the definition of the face of a convex set). In particular, when the dimension of a face is zero, i.e., the face is an extremal point, it is possible to obtain a sparse representation of the minimizer in terms of the extremal points and extremal rays of a certain sublevel set of the regularizer (see Section 2 in \cite{chambollerepresenter} for the definition of extremal ray). This is still true when the dimension of the face is larger than zero and finite (see Theorem 1 in \cite{chambollerepresenter}).
Existence of extremal points is then, e.g., obtained by Klee's extension of the Krein--Milman theorem \cite{klee2} in case of regularizers whose sublevel sets are closed, convex and locally compact in an appropriate locally convex space.
On the contrary, our theorem always provides the existence of a minimizer represented as a convex combination of extremal points of the ball of the regularizer. Due to the different techniques used, such a sparse minimizer does not necessarily belong to a finite-dimensional face of the set of the solutions.

\medskip Finally, let us point out that in order to obtain sparse
representations for solutions of the general variational
problem~\eqref{prointro}, the authors of \cite{chambollerepresenter}
consider solutions of the optimization problem
\begin{equation}\label{strong}
\min_{u\in X} \ \phi(u) \quad \text{subjected to  }\  \mathcal{A}u = y\,,
\end{equation} 
i.e., are forced
to pass from~\eqref{prointro} to~\eqref{strong}, then apply Klee--Dubins' theorem, and afterwards use sparse
solutions of \eqref{strong} to construct sparse solutions
of~\eqref{prointro}.
We remark that due to the use of different techniques, such a procedure is not required in our paper.
\end{rmk}

\section{Examples of sparsity for relevant regularizers}
In this section we study the structure of the extremal points for relevant regularizers, in order to applying the results of the previous section. The first example is about the Radon norm in the space of measures.
\subsection{The Radon norm for measures}\label{radonnormmeas}
Given $\Omega \subset \R^d$ a non-empty, open, bounded set, we set $X = \mathcal{M}({\Omega})$ the set of Radon measures on $\Omega$. 
We choose $\phi(u) = \|u\|_{\mathcal{M}}$ defined as
\begin{displaymath}
\|u\|_{\mathcal{M}} = \sup\left\{\int_{\Omega} \varphi\, du : \varphi \in C_c(\Omega),\ \|\varphi\|_\infty \leq 1\right\}\,. 
\end{displaymath}
Moreover we consider $F : H \rightarrow (-\infty, +\infty]$ satisfying the hypothesis given in Section \ref{assumptions}
and a linear continuous and surjective operator $\mathcal{A} : \mathcal{M}(\Omega) \rightarrow H$, where $H$ is a finite dimensional Hilbert space.

Under these choices we want to apply Theorem \ref{maint} to Problem \ref{problem}.
In this case
\begin{displaymath}
B = \{u \in \mathcal{M}(\Omega) : \|u\|_{\mathcal{M}}\leq 1\}
\end{displaymath}
and $\mathcal{N} = \{0\}$ such that $X_\Na = X$ and $B=B_\mathcal{N}$. It is standard to check that with these choices, all the hypotheses of Theorem \ref{maint} are verified. 

In order to get more information from Theorem \ref{maint} we need to characterize the extremal points of $B$.
This result is well-known, but we go through it for the reader's convenience.
\begin{prop}\label{dirac}
Given $B$ defined as above we have that
\begin{equation}
Ext(B) = \{\sigma \delta_x : x \in \Omega,\ \sigma \in \{-1,1\}\}\,.
\end{equation}
\end{prop}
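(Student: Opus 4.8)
The plan is to prove the two inclusions separately, as is standard for characterizations of extremal points. For the easy inclusion $\{\sigma\delta_x : x\in\Omega,\ \sigma\in\{-1,1\}\}\subset \mathrm{Ext}(B)$, suppose $\sigma\delta_x = t u_1 + (1-t)u_2$ with $u_1,u_2\in B$ and $t\in(0,1)$. Since $\|u_1\|_\M,\|u_2\|_\M\le 1$ and $\|\sigma\delta_x\|_\M = 1$, the triangle inequality $1 = \|\sigma\delta_x\|_\M \le t\|u_1\|_\M + (1-t)\|u_2\|_\M \le 1$ forces $\|u_1\|_\M = \|u_2\|_\M = 1$. Then I would argue that both $u_1$ and $u_2$ must be supported on $\{x\}$: otherwise, writing $u_j = u_j\restrict\{x\} + u_j\restrict(\Omega\setminus\{x\})$, the mass splits as $\|u_j\|_\M = |u_j(\{x\})| + \|u_j\restrict(\Omega\setminus\{x\})\|_\M$, and the part away from $x$ cannot cancel in the convex combination (the supports are disjoint from $\{x\}$), so it must vanish for both. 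Hence $u_j = c_j\delta_x$ with $|c_j|\le 1$, and $\sigma = tc_1 + (1-t)c_2$ with $|c_j|\le 1$ forces $c_1 = c_2 = \sigma$ by strict convexity of the constraint $|c|\le 1$ near its boundary. Thus $u_1 = u_2 = \sigma\delta_x$.

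For the reverse inclusion $\mathrm{Ext}(B)\subset\{\sigma\delta_x\}$, I would take $u\in B$ that is \emph{not} of the form $\sigma\delta_x$ and exhibit a nontrivial convex decomposition. First, if $\|u\|_\M < 1$, then $u = \tfrac12(u+v) + \tfrac12(u-v)$ for any small $v\ne 0$ with $\|u\pm v\|_\M\le 1$, so $u\notin\mathrm{Ext}(B)$; hence we may assume $\|u\|_\M = 1$. Using the Jordan/Hahn decomposition $u = u^+ - u^-$ with $|u| = u^+ + u^-$ a probability measure on $\Omega$: if $|u|$ is not a single Dirac mass, there is a Borel set $A\subset\Omega$ with $0 < |u|(A) < 1$. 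Write $u = u\restrict A + u\restrict(A^c)$ and set $a = |u|(A)\in(0,1)$, so that $u = a\,\mu_1 + (1-a)\,\mu_2$ where $\mu_1 = a^{-1}u\restrict A$ and $\mu_2 = (1-a)^{-1}u\restrict(A^c)$ both lie in $B$ (since $\|\mu_1\|_\M = a^{-1}|u|(A) = 1$, similarly for $\mu_2$) and are distinct because they have disjoint supports. This contradicts extremality unless $|u| = \delta_x$ for some $x\in\Omega$, in which case $u = \sigma\delta_x$ with $\sigma = u(\{x\})/|u|(\{x\})\in\{-1,1\}$, as desired.

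The only genuinely delicate point is making the ``mass splits additively across disjoint supports'' argument fully rigorous in the first inclusion, i.e.\ that a measure in $B$ appearing in a convex combination equal to $\sigma\delta_x$ must itself be concentrated at $x$; this is where one must be careful that the Radon norm is additive on mutually singular measures and that cancellation in the convex combination cannot occur between the part at $x$ and the part away from $x$ (their supports being disjoint). Everything else is routine measure theory. I would also remark at the end that all Dirac masses $\delta_x$ with $x\in\Omega$ are admissible extremal points of $B = B_\Na$ since $\Na = \{0\}$ here, so Theorem~\ref{maint} applies directly and yields, for Problem~\ref{problem} with $\phi = \|\cdot\|_\M$, a minimizer of the form $\overline u = \sum_{i=1}^p \gamma_i\sigma_i\delta_{x_i}$ with $p\le\dim H$ and $\sum_i\gamma_i = \|\overline u\|_\M$, recovering the classical result of \cite{shapiro}.
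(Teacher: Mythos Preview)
Your proof is correct and follows essentially the same two-inclusion strategy as the paper: show signed Diracs are extremal by analyzing an assumed convex decomposition, and show everything else fails to be extremal by splitting along a Borel set $A$. Two minor remarks. First, your phrase ``strict convexity of the constraint $|c|\le 1$'' is not the right terminology (the interval $[-1,1]$ is not strictly convex); what you are really using is that $\sigma\in\{-1,1\}$ is an extreme point of $[-1,1]$, so $\sigma = tc_1+(1-t)c_2$ with $|c_j|\le 1$ forces $c_1=c_2=\sigma$. Second, your reverse-inclusion argument, splitting via $|u|$ rather than $u$, is actually cleaner than the paper's version: the paper finds $A$ with $0<u(A)<u(\Omega)$ and writes the decomposition in terms of $u(A)/u(\Omega)$, which for a signed measure is less transparent (and there is a visible typo in the coefficients). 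Your normalization $\mu_1 = |u|(A)^{-1}\,u\restrict A$, $\mu_2 = |u|(A^c)^{-1}\,u\restrict A^c$ makes it immediate that $\|\mu_j\|_\M = 1$ and that $\mu_1\neq\mu_2$.
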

\begin{proof}
Let us prove that $\delta_x, -\delta_x \in Ext(B)$ for every $x\in \Omega$. Indeed, let us suppose that there exists $u_1, u_2 \in B$ such that 
\begin{displaymath}
\delta_x = tu_1 + (1-t)u_2
\end{displaymath} 
for $t\in (0,1)$. Separating the positive part and negative part of $u_1$ and $u_2$, we can suppose without loss of generality that $u_1 \geq 0$ and $u_2 \geq 0$. Then $\mbox{supp}\, u_1 \subset \{x\}$ and hence $u_1 = \delta_x$. Similarly one can prove that $-\delta_x \in Ext(B)$ for every $x\in \Omega$.

On the other hand we prove that there are not other extremal points different from the Dirac deltas.
Suppose by contradiction that there exists an extremal point $u$ not supported on a singleton. Then $\|u\|_\mathcal{M} =1$ and there exists a measurable set $A \subset \Omega$ such that $0<|u|(A)<1$. We have
\begin{displaymath}
u = |u|(A) \left[\frac{1}{|u|(A)}u\res A\right]  + |u|(\Omega \setminus A)\left[\frac{1}{|u|(\Omega \setminus A)} u\res (\Omega \setminus A)   \right],
\end{displaymath}
which implies that $u$ is not an extremal point.
Hence all the extremal points of $B$ are of the form $a\delta_x$ where $a \in \R$ and $x\in \Omega$. As the extremal points of $B$ have unit Radon norm we deduce immediately that $|a| = 1$.
\end{proof}

From Proposition \ref{dirac} we obtain immediately the following theorem:

\begin{thm}
Under the previous choices of $X$, $\phi$, $\mathcal{A}$ and $F$, there exists a minimizer of Problem \ref{problem} denoted by $\overline u \in X$ such that
\begin{displaymath}
\overline{u} = \sum_{i=1}^p \gamma_i \delta_{x_i}\, ,
\end{displaymath}
where $p\leq \mbox{dim}\,H$, $\gamma_1,\ldots,\gamma_p \in \R \setminus \{0\}$, $x_1,\ldots,x_p \in \Omega$ and $\sum_{i=1}^p |\gamma_i| = \|\overline{u}\|_\M$.
\end{thm}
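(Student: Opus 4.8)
The plan is to derive the statement directly from Theorem~\ref{maint}, once the abstract hypotheses have been verified for the present choices and the extremal points have been identified via Proposition~\ref{dirac}. No genuinely new argument is needed; the work has been localized in those two earlier results.

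First I would confirm that the present setting fits the abstract framework of Section~\ref{assumptions}. Equip $\M(\Omega)$ with the weak* topology it carries as the dual of $C_0(\Omega)$ (the closure of $C_c(\Omega)$ in the sup norm); this makes it a locally convex space. The functional $\phi = \|\cdot\|_\M$ is the associated dual norm, hence a seminorm, and it is weak* lower semi-continuous, being a supremum of the weak* continuous linear functionals $u \mapsto \int_\Omega \varphi \, du$ with $\varphi \in C_c(\Omega)$, $\|\varphi\|_\infty \leq 1$. Since $\mathrm{dom}\,\phi = \M(\Omega)$ and $\A$ is surjective, $\A(\mathrm{dom}\,\phi) = H$, so [\textbf{H0}] holds. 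The null-space is $\Na = \{0\}$, whence $X_\Na = X$, $\phi_\Na = \phi$ and $B_\Na = B$; the sublevel sets $S^-(\phi_\Na,\alpha) = \{u \in \M(\Omega) : \|u\|_\M \leq \alpha\}$ are weak* compact by the Banach--Alaoglu theorem, which gives [\textbf{H1}]. Finally, $F$ is assumed to satisfy the hypotheses of Section~\ref{assumptions} and $\A$ is continuous by assumption, so all requirements of Theorem~\ref{maint} are met.

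Next I would invoke Theorem~\ref{maint}, which yields a minimizer $\overline u$ of Problem~\ref{problem} of the form $\overline u = \overline\psi + \sum_{i=1}^p \gamma_i u_i$ with $\overline\psi \in \Na = \{0\}$ (so $\overline\psi = 0$), with $p \leq \dim H_\Na = \dim\bigl(H/\A(\{0\})\bigr) = \dim H$, with $\gamma_i > 0$ and $\sum_{i=1}^p \gamma_i = \phi(\overline u) = \|\overline u\|_\M$, and with $u_i \in \mathrm{Ext}(B_\Na) = \mathrm{Ext}(B)$. By Proposition~\ref{dirac}, each $u_i$ equals $\sigma_i \delta_{x_i}$ for some $x_i \in \Omega$ and $\sigma_i \in \{-1,1\}$.

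To conclude I would set $\widetilde\gamma_i := \sigma_i \gamma_i \in \R \setminus \{0\}$, so that $\overline u = \sum_{i=1}^p \widetilde\gamma_i \delta_{x_i}$; after merging any terms with coinciding $x_i$ (which does not increase $p$) one obtains exactly the claimed representation, and since $|\widetilde\gamma_i| = \gamma_i$, the normalization $\sum_{i=1}^p |\widetilde\gamma_i| = \sum_{i=1}^p \gamma_i = \|\overline u\|_\M$ is inherited from Theorem~\ref{maint}. I do not expect any real obstacle here, as the entire content has been pushed into Theorem~\ref{maint} and Proposition~\ref{dirac}; the only point requiring mild care is the choice of predual for $\M(\Omega)$, which must simultaneously guarantee weak* compactness of norm-bounded sets (for [\textbf{H1}]) and weak* continuity of $\A$.
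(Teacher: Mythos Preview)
Your proposal is correct and follows exactly the route the paper takes: the paper simply states that the theorem follows ``immediately'' from Proposition~\ref{dirac} (together with Theorem~\ref{maint}), and you have spelled out precisely this derivation, including the verification of [\textbf{H0}], [\textbf{H1}] and the identification $\Na=\{0\}$, $H_\Na=H$. The only superfluous step is the merging of coinciding $x_i$, which is not required by the statement and is in fact automatically ruled out by the hyperplane condition~\eqref{claim} in the proof of Theorem~\ref{maint}; you can safely drop it.
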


\subsection{The total variation for BV functions}
Let $\Omega \subset \R^d$ be a non-empty, bounded Lipschitz domain. We want to apply the result of the previous section for $X = BV(\Omega)$ and $\phi(u) = |Du|(\Omega)$, where 
\begin{equation}
|Du|(\Omega) := \sup\left\{\int_{\Omega} u \, \div \varphi\, dx : \varphi \in C_c^1(\Omega), \ \|\varphi\|_\infty \leq 1\right\}\,. 
\end{equation}

This is a relevant setting for inverse problems in image processing as $\phi(u) = |Du|(\Omega)$ is the classical TV regularizer which has widely been studied and used in applications. We refer to \cite{FOB} for the basic definitions regarding BV functions and sets of finite perimeter that we will 
use. 

We equip $X$ with the weak* topology for BV functions by interpreting $BV(\Omega)$ as a dual space (see, for instance \cite[Remark 3.12]{FOB}).
As in the previous example we consider a linear, continuous and surjective map $\mathcal{A} : BV(\Omega) \rightarrow H$ and $F: H \rightarrow (-\infty,+\infty]$ that satisfies the assumptions given in Section \ref{assumptions}. Under these choices we want use Theorem \ref{maint} to characterize the sparse solutions of Problem \ref{problem}.

Notice that with the chosen topology on $X$, the functional $\phi(u) = |Du|(\Omega)$ is a lower semi-continuous seminorm and $\mathcal{A}$ satisfies assumption \ref{H0}. Therefore in order to apply Theorem \ref{maint} we just need to verify Hypothesis \ref{H1} that is the content of the next lemma. Notice that in this specific case, we have $\mathcal{N} = \R$ as $\Omega$ is connected.

\begin{lemma}\label{netlemma}
Defining $\phi_\mathcal{N}(u_\mathcal{N}) := |Du|(\Omega)$, the sublevel sets
\begin{displaymath}
S^-(\phi_{\mathcal{N}},\alpha):=\{u_\mathcal{N} \in X_{\mathcal{N}} : \phi_{\mathcal{N}}(u_\mathcal{N})\leq \alpha\}
\end{displaymath}
are compact for every $\alpha > 0$.
\end{lemma}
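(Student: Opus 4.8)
The plan is to reduce the claimed compactness of the sublevel sets in $X_\Na = BV(\Omega)/\R$ to the classical weak* compactness of norm-bounded sets in $BV(\Omega)$, invoking Remark~\ref{coerclsc} which says it suffices to exhibit, for each $\alpha>0$, a weak*-compact subset of $X_\Na$ containing $S^-(\phi_\Na,\alpha)$. The natural candidate is the image under $\pi_\Na$ of the set $C_\alpha := \{u\in BV(\Omega): |Du|(\Omega)\le\alpha,\ \fint_\Omega u\,dx = 0\}$ (or, equivalently, any set of bounded-mean representatives), since every class in $S^-(\phi_\Na,\alpha)$ has exactly one representative of zero mean, which lies in $C_\alpha$. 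So the heart of the matter is showing $C_\alpha$ is weak*-compact in $BV(\Omega)$, and then using continuity of $\pi_\Na$ to transfer compactness.

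The key steps, in order, are: (i) Recall the Poincaré–Wirtinger inequality on the bounded Lipschitz domain $\Omega$: there is $c=c(\Omega)$ with $\|u - \fint_\Omega u\|_{L^1(\Omega)} \le c\,|Du|(\Omega)$ for all $u\in BV(\Omega)$. Hence every $u\in C_\alpha$ satisfies $\|u\|_{BV} = \|u\|_{L^1} + |Du|(\Omega) \le (c+1)\alpha$, so $C_\alpha$ is contained in a closed ball of $BV(\Omega)$. (ii) Recall that $BV(\Omega)$ is (isometric to) a dual Banach space — as noted in the excerpt via \cite[Remark 3.12]{FOB} — and $X$ is equipped with precisely this weak* topology; by the Banach–Alaoglu theorem the closed ball is weak*-compact. (iii) Show $C_\alpha$ is weak*-closed: the constraints $|Du|(\Omega)\le\alpha$ and $\fint_\Omega u\,dx = 0$ are both weak*-closed conditions, the former because $\phi(u)=|Du|(\Omega)$ is weak*-lower semi-continuous (already assumed in the setup) and the latter because $u\mapsto\int_\Omega u\,dx$ is weak*-continuous (integration against the constant $1\in L^\infty(\Omega)\subset$ the predual). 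A weak*-closed subset of a weak*-compact ball is weak*-compact, so $C_\alpha$ is weak*-compact. (iv) Since $\pi_\Na: X\to X_\Na$ is continuous (for the quotient topology), $\pi_\Na(C_\alpha)$ is compact in $X_\Na$; and since each $u_\Na\in S^-(\phi_\Na,\alpha)$ has a zero-mean representative in $C_\alpha$, we get $S^-(\phi_\Na,\alpha)\subset\pi_\Na(C_\alpha)$, which is compact. By Remark~\ref{coerclsc} (or directly, since the sublevel set is also closed by lower semi-continuity of $\phi_\Na$ from the same remark), $S^-(\phi_\Na,\alpha)$ is compact, which is \textbf{[H1]}.

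The main obstacle — really the only nontrivial point — is step (iii), the weak*-closedness of $C_\alpha$, together with making sure the weak* topology used on $BV(\Omega)$ is the same one under which $\phi$ was already declared lower semi-continuous and under which the mean functional is continuous. This is exactly why the predual description from \cite{FOB} is invoked: it simultaneously gives Banach–Alaoglu (step ii) and identifies the functionals $u\mapsto\int_\Omega u\varphi\,dx$ for $\varphi$ in the predual as the weak*-continuous ones, which covers $\varphi\equiv 1$. Everything else is a direct citation (Poincaré–Wirtinger, Banach–Alaoglu, continuity of the quotient map) with no real computation.
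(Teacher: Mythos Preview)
Your argument is essentially the paper's proof repackaged: both use the Poincar\'e inequality to bound representatives in a $BV$-norm ball, invoke Banach--Alaoglu via the dual-space structure of $BV(\Omega)$ from \cite[Remark~3.12]{FOB}, push compactness through the continuous projection $\pi_\Na$, and close up with the lower semi-continuity already recorded in Remark~\ref{coerclsc}. The paper carries this out by chasing a net directly; you phrase it as ``sublevel set $\subset$ continuous image of a weak*-compact set'', which is the same content.

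One point to tighten: your parenthetical justification in step~(iii), ``$1\in L^\infty(\Omega)\subset$ the predual'', is not accurate as stated---the predual described in \cite[Remark~3.12]{FOB} is not $L^\infty(\Omega)$, and it is not obvious that the constant function lies in it. The cleanest fix is simply to drop the zero-mean constraint: let $K_\alpha$ be the closed $BV$-ball of radius $(c+1)\alpha$; this is weak*-compact by Banach--Alaoglu with no closedness argument needed, and Poincar\'e already gives $S^-(\phi_\Na,\alpha)\subset\pi_\Na(K_\alpha)$. Then Remark~\ref{coerclsc} finishes the job exactly as you say. (Alternatively, one can argue that on \emph{bounded} sets the weak* topology coincides with $L^1$-convergence plus weak* convergence of the gradients---this is what the paper's net argument uses---so the mean is continuous there; but the first fix is shorter.)
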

\begin{proof}
We first remark that the metrizability of the space $X_\mathcal{N}$ on bounded sets is not straightforward to show. Therefore we work with \emph{nets} instead of sequences (we refer to Sections 1.3, 1.4, 1.6 in  \cite{pedersen} for the basic properties of nets).

Consider a net $(u_\mathcal{N}^\beta)_\beta \subset X_\mathcal{N}$ such that $|Du^\beta|(\Omega) \leq \alpha$. Using the Poincar\'e inequality for BV functions (see Theorem 3.44 in \cite{FOB}) we deduce that there exists $c^\beta \in \R$ such that 
\begin{equation}\label{rightconstant}
\|u^\beta + c^\beta\|_{BV} = \|u^\beta + c^\beta\|_{L^1} + |Du^\beta|(\Omega) \leq C(\Omega) |Du^\beta|(\Omega) + |Du^\beta|(\Omega) \leq \alpha(C(\Omega) + 1)
\end{equation}
for every $\beta$. Recall now that bounded sets of BV functions are compact with respect to weak* convergence of nets (as mentioned earlier, %
the space of BV functions is isomorphic to the dual of a separable Banach space according to Remark 3.12 in \cite{FOB}; this implies compactness of weak*-closed bounded sets by the Banach--Alaoglu theorem). So, thanks to \eqref{rightconstant}, there exists a subnet (not relabelled) $(u^{\beta} + c^{\beta})_\beta$ in $BV(\Omega)$ and $u \in BV(\Omega)$ such that $u^{\beta} + c^{\beta} \rightarrow u$ in $L^1(\Omega)$ and $D u^{\beta} \stackrel{*}{\rightharpoonup} Du$ in $\M(\Omega,\R^d)$ in the sense of nets (see Theorem 1.6.2 in \cite{pedersen}).  By the continuity of the projection on the quotient we obtain that 
\begin{equation*}
u^{\beta} + \mathcal{N} \rightarrow u + \mathcal{N}  \qquad \mbox{ in } \ X_{\mathcal{N}},
\end{equation*}
see Proposition 1.4.3 in \cite{pedersen}.
Then, by the lower semi-continuity of $|Du|(\Omega)$ we have that the sublevel sets of $|Du|(\Omega)$ are weak*-closed. %
This implies that $u\in S^{-}(\phi_\mathcal{N}, \alpha)$ (Proposition 1.3.6 in \cite{pedersen}).
\end{proof}

In order to have an explicit representation for the sparse minimizer we aim to characterize the extremal points of the set 
\begin{equation}
B_\mathcal{N} = \{u \in BV(\Omega): |D u|(\Omega)\leq 1\} + \mathcal{N} \subset X_\Na \,.
\end{equation}
The result is known for $\Omega = \R^d$ and it was proved in \cite{Flemingext} for $d=2$ and then extended to all dimensions in \cite{fleming2} and in a slighly different setting in \cite{Ambrosiocasellesconnected}. 
Our plan is to modify the approach in \cite{Ambrosiocasellesconnected} for the case of $\Omega$ bounded.
The first definition is taken from \cite{Ambrosiocasellesconnected} and it is a suitable modification of the classical definition for currents given in \cite{GMT}. Recall that a set of \emph{finite perimeter} is a measurable set $A \subset \Omega$ such that $\chi_A \in BV(\Omega)$ for the characteristic function $\chi_A$ of $A$. In this case, we call $P(A,\Omega) = |D\chi_A|(\Omega)$ the \emph{perimeter} of $A$.

\begin{defi}[Decomposable set]\label{dec}
A set of finite perimeter $E \subset \Omega$ is \emph{decomposable} if there exists a partition of $E$ in two sets $A$, $B$ with $|A| > 0$ and $|B| > 0$ such that $P(E,\Omega) = P(A,\Omega) + P(B,\Omega)$. A set of finite perimeter is \emph{indecomposable} if it is not \emph{decomposable}.
\end{defi}

In \cite{Ambrosiocasellesconnected}, the notion of \emph{saturated} set is introduced that is suitable in the case $\Omega = \R^d$. In our case of bounded domains, we do not need this requirement, but we ask that both the set and its complement are indecomposable.

\begin{defi}[Simple set]\label{simpleset}
We say that a set of finite perimeter $E$ is \emph{simple} if both $E$ and $\Omega \setminus E$ are \emph{indecomposable}.
\end{defi}

In what follows we denote by $E^1$ the measure theoretic interior of $E$ defined as
\begin{equation*}
E^1 := \left\{x \in \R^d : \lim_{r \rightarrow 0} \frac{|E \cap B_r(x)|}{|B_r(x)|} = 1\right\}
\end{equation*}
and by $E^0$ the measure theoretic exterior:
\begin{equation*}
E^0 := \left\{x \in \R^d : \lim_{r \rightarrow 0} \frac{|E \cap B_r(x)|}{|B_r(x)|} = 0\right\}\, .
\end{equation*}
The essential boundary of $E$ is then defined as $\partial^* E = \R^d \setminus (E^0 \cup E^1)$.

We will also need the following result due to Dolzmann and M\"uller \cite{dolzmann}. 

\begin{lemma}[Constancy theorem]\label{Dolz}
Given $u\in BV(\Omega)$ and $E \subset \Omega$ an indecomposable set such that
\begin{displaymath}
|Du|(E^1) = 0\, ,
\end{displaymath}
then there exists $c \in \R$ such that $u(x) = c$ almost everywhere in $E$.
\end{lemma}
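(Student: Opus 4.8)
The plan is to reduce the claim to a property of the superlevel sets of $u$ via the coarea formula, and then to invoke the indecomposability of $E$. For $t\in\R$ put $F_t := \{x\in\Omega : u(x) > t\}$. The coarea formula for $BV$ functions gives, as an identity of measures on $\Omega$, $|Du|(B) = \int_\R |D\chi_{F_t}|(B)\,dt$ for every Borel set $B\subseteq\Omega$; taking $B=\Omega$ shows $\int_\R P(F_t,\Omega)\,dt = |Du|(\Omega) < \infty$, so $F_t$ has finite perimeter in $\Omega$ for a.e.\ $t$, and taking $B = E^1$ together with the hypothesis $|Du|(E^1)=0$ yields $\int_\R |D\chi_{F_t}|(E^1)\,dt = 0$, hence $|D\chi_{F_t}|(E^1) = 0$ for a.e.\ $t\in\R$. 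Using the De Giorgi--Federer structure theory, $|D\chi_{A}| = \Ha^{d-1}\restrict\partial^* A$ and $P(A,\Omega) = \Ha^{d-1}(\partial^* A\cap\Omega)$ for any set $A$ of finite perimeter in $\Omega$, so this last condition means $\Ha^{d-1}(\partial^* F_t\cap E^1) = 0$.

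Next, fix one such $t$, write $F := F_t$, and set $G := E\cap F$; both $G$ and $E\setminus G = E\cap F_t^{c}$ are sets of finite perimeter in $\Omega$. The heart of the argument is to prove the additivity $P(E,\Omega) = P(G,\Omega) + P(E\setminus G,\Omega)$. Since $G\subseteq E$, monotonicity of the Lebesgue density of a set at a point gives $\partial^* G\cap E^0 = \emptyset$ and $\partial^* E\cap G^1 = \emptyset$, and the same applies to $E\setminus G$. Moreover, at every point of $E^1$ the set $G$ has the same fine blow-up as $F$ (their symmetric difference $F\setminus E$ has density zero there), so $\partial^* G\cap E^1 = \partial^* F\cap E^1$ is $\Ha^{d-1}$-null, and likewise $\partial^*(E\setminus G)\cap E^1$ is $\Ha^{d-1}$-null. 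Combining these with Federer's theorem (that $\Ha^{d-1}$-a.e.\ point of $\R^d$ lies in $A^0\cup A^1\cup\partial^* A$, with density $\tfrac12$ on $\partial^* A$), one obtains that, up to $\Ha^{d-1}$-negligible sets, $\partial^* G\cap\Omega$ and $\partial^*(E\setminus G)\cap\Omega$ are both contained in $\partial^* E\cap\Omega$, and that $\Ha^{d-1}$-a.e.\ point of $\partial^* E\cap\Omega$ lies in exactly one of them (the densities of $G$ and $E\setminus G$ there must be $0$ and $\tfrac12$, in one order or the other, since they are drawn from $\{0,\tfrac12,1\}$ and sum to $\tfrac12$). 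Taking $\Ha^{d-1}$-measure over $\Omega$ gives the desired identity. Consequently, if both $|G|>0$ and $|E\setminus G|>0$, the partition $E = G\cup(E\setminus G)$ would violate the indecomposability of $E$ (Definition~\ref{dec}); therefore $|E\cap F_t|\in\{0,|E|\}$ for a.e.\ $t\in\R$.

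Finally, consider $g(t) := |E\cap\{u>t\}|$, which is non-increasing in $t$, satisfies $g(t)\to|E|$ as $t\to-\infty$ and $g(t)\to0$ as $t\to+\infty$ (because $u$ is finite a.e.\ on $\Omega$), and takes values in $\{0,|E|\}$ for a.e.\ $t$. By monotonicity, $g$ equals $|E|$ on an interval unbounded below and equals $0$ on an interval unbounded above; since a.e.\ $t$ lies in one of these two intervals, they must meet at a single finite value $c$, so that $g(t)=|E|$ for $t<c$ and $g(t)=0$ for $t>c$. This forces $|E\cap\{u<c\}| = 0$ and $|E\cap\{u>c\}| = 0$, i.e.\ $u = c$ almost everywhere on $E$, which is the assertion (the case $|E|=0$ being trivial).

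The main obstacle is the perimeter splitting $P(E,\Omega) = P(E\cap F,\Omega) + P(E\setminus(E\cap F),\Omega)$ under the condition $\Ha^{d-1}(\partial^* F\cap E^1) = 0$; this requires the fine structure theory of sets of finite perimeter (Federer's theorem and the behaviour of essential boundaries under intersection). The coarea reduction and the concluding monotone-function argument are routine by comparison.
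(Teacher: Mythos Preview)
The paper does not prove this lemma; it is quoted from Dolzmann and M\"uller \cite{dolzmann} and used as a black box. Your argument is therefore not competing against a proof in the paper but rather supplying one, and it is correct. The route you take---coarea to reduce to superlevel sets, then the perimeter identity $P(E,\Omega)=P(E\cap F_t,\Omega)+P(E\setminus F_t,\Omega)$ via Federer's structure theorem, then indecomposability, then the monotone-function argument on $t\mapsto |E\cap\{u>t\}|$---is the standard one (it is essentially how the result is obtained in \cite{Ambrosiocasellesconnected} as well).

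Two minor remarks on presentation. First, in the density bookkeeping on $\partial^* E$ you appeal to Federer's theorem twice (once for $G$, once for $E\setminus G$) to force the densities into $\{0,\tfrac12,1\}$; it might be worth stating explicitly that this is where the ``$\Ha^{d-1}$-a.e.'' qualifier is spent, since that is the only delicate point. Second, the step ``$g$ is non-increasing and takes values in $\{0,|E|\}$ for a.e.\ $t$, hence there is a single transition value $c$'' deserves one more sentence: the set $\{t:0<g(t)<|E|\}$ is an interval by monotonicity and has measure zero, hence is at most a singleton. You essentially say this, but making the interval observation explicit removes any doubt.
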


With the following theorem we are able to characterize the extremal points of $B_\mathcal{N}$ in a rather straightforward way without relying on indecomposability results for the reduced boundary as in \cite{fleming2}.
\begin{thm}
We have that
\begin{displaymath}
Ext (B_\mathcal{N}) = \left\{\frac{\chi_E}{P(E, \Omega)} + \mathcal{N} : E \mbox{ simple}\right\}\,.
\end{displaymath}
\end{thm}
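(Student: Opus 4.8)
The proof is carried out by establishing the two inclusions, using throughout the coarea formula for $BV$ functions, the De~Giorgi structure theorem ($|D\chi_E| = \Ha^{d-1}\restrict\partial^{*}E$) and the constancy theorem (Lemma~\ref{Dolz}). Two preliminary remarks. Since $\Omega$ is connected, $\phi_\Na$ is a genuine norm on $X_\Na$ and $B_\Na$ is exactly its closed unit ball $\{u_\Na : \phi_\Na(u_\Na)\le 1\}$; hence every $u_\Na\in Ext(B_\Na)$ lies on the unit sphere, $\phi_\Na(u_\Na)=1$, and in particular $u$ is non-constant ($0$ is the centre of the symmetric set $B_\Na$, and a point with $0<\phi_\Na<1$ is the midpoint of $(1\pm\varepsilon)u_\Na$ for $\varepsilon$ small). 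Moreover, for a simple set $E$ both $E$ and $\Omega\setminus E$ have positive measure, so $0<P(E,\Omega)<+\infty$ and $\chi_E/P(E,\Omega)+\Na\in B_\Na$.

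For the inclusion $\supseteq$, fix a simple set $E$, put $c:=P(E,\Omega)$ and suppose $\chi_E/c+\Na=t(u_1+\Na)+(1-t)(u_2+\Na)$ with $t\in(0,1)$ and $u_i+\Na\in B_\Na$. Adding constants we may assume $u_i\in BV(\Omega)$, $|Du_i|(\Omega)\le1$ and $\chi_E/c=tu_1+(1-t)u_2$ in $L^1(\Omega)$. Passing to total variations in $c^{-1}D\chi_E=tDu_1+(1-t)Du_2$ gives $1\le t|Du_1|(\Omega)+(1-t)|Du_2|(\Omega)\le1$, so $|Du_i|(\Omega)=1$; moreover the nonnegative measure $t|Du_1|+(1-t)|Du_2|-c^{-1}|D\chi_E|$ has total mass $0$, hence vanishes, which forces $|Du_i|\ll|D\chi_E|=\Ha^{d-1}\restrict\partial^{*}E$. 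Thus each $|Du_i|$ charges neither $E^1$ nor $E^0$; since $E$ and $\Omega\setminus E$ are indecomposable and $(\Omega\setminus E)^1\cap\Omega=E^0\cap\Omega$, two applications of the constancy theorem give $u_i=a_i\chi_E+b_i$ a.e.\ in $\Omega$. Then $1=|Du_i|(\Omega)=|a_i-b_i|\,c$, so $u_i+\Na=\pm(\chi_E/c+\Na)$; substituting back and using $\chi_E/c+\Na\neq0$ forces both signs to be $+$, whence $u_1+\Na=u_2+\Na=\chi_E/c+\Na$ and $\chi_E/c+\Na\in Ext(B_\Na)$.

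For the inclusion $\subseteq$, let $u+\Na\in Ext(B_\Na)$, so $|Du|(\Omega)=1$ and $u$ is non-constant; pick $t\in\R$ with $|\{u<t\}|,|\{u>t\}|>0$ and split $u=v+w$ with $v:=\min(u,t)$, $w:=(u-t)^{+}$. Both $v,w$ are non-constant, and by the coarea formula (using $\{v>s\}=\{u>s\}$ for $s<t$ and $\{w>s\}=\{u>t+s\}$ for $s>0$) one gets $|Dv|(\Omega)+|Dw|(\Omega)=|Du|(\Omega)=1$, so $\lambda:=|Dv|(\Omega)\in(0,1)$ and $1-\lambda=|Dw|(\Omega)$. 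Then $u+\Na=\lambda(v/\lambda+\Na)+(1-\lambda)(w/(1-\lambda)+\Na)$ is a nontrivial convex combination of elements of $B_\Na$, so extremality forces $v=\lambda u+\text{const}$ and $w=(1-\lambda)u+\text{const}$ a.e. As $w\equiv0$ on $\{u\le t\}$ and $v\equiv t$ on $\{u\ge t\}$, $u$ is a.e.\ constant on each of these two sets; since $|\{u=t\}|=0$, this yields $u=m+(M-m)\chi_E$ a.e.\ with $E:=\{u>t\}$, $m\neq M$ and $0<|E|<|\Omega|$. Normalizing, and replacing $E$ by $\Omega\setminus E$ if $M<m$ (which does not change the class, since $-\chi_E+\Na=\chi_{\Omega\setminus E}+\Na$ and $P(\Omega\setminus E,\Omega)=P(E,\Omega)$), we obtain $u+\Na=\chi_E/P(E,\Omega)+\Na$. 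Finally $E$ is simple: were $E=A\sqcup B$ with $|A|,|B|>0$ and $P(E,\Omega)=P(A,\Omega)+P(B,\Omega)$, then $P(A,\Omega),P(B,\Omega)>0$ and $\chi_E/P(E,\Omega)+\Na$ would be a nontrivial convex combination of $\chi_A/P(A,\Omega)+\Na$ and $\chi_B/P(B,\Omega)+\Na$ in $B_\Na$, so extremality would force $\chi_A/P(A,\Omega)-\chi_B/P(B,\Omega)$ to be constant a.e., impossible as it is $1/P(A,\Omega)>0$ on $A$ and $-1/P(B,\Omega)<0$ on $B$; applying the same argument to the (also extremal) point $-(u+\Na)=\chi_{\Omega\setminus E}/P(\Omega\setminus E,\Omega)+\Na$ shows $\Omega\setminus E$ is indecomposable as well.

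The only genuinely delicate point is the identification, in the $\supseteq$ direction, of the support of the measures $|Du_i|$: once one knows $|Du_i|\ll\Ha^{d-1}\restrict\partial^{*}E$ one has to bookkeep carefully the measure-theoretic interiors and exteriors of $E$ and of $\Omega\setminus E$ relative to $\Omega$ (in particular their behaviour near $\partial\Omega$) so as to ensure that the constancy theorem of Lemma~\ref{Dolz} is applicable on both pieces. The remaining steps are routine manipulations with the coarea formula and with the definition of extremality.
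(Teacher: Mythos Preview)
Your proof is correct and follows essentially the same strategy as the paper's: both directions rely on the coarea formula to split $u$ via a truncation $u=\min(u,t)+(u-t)^{+}$, and on the constancy theorem (Lemma~\ref{Dolz}) applied on $E^{1}$ and $E^{0}$ after showing $|Du_i|\ll |D\chi_E|$. Two minor but pleasant differences: you allow any level $t$ giving a nontrivial split (the paper chooses the median level so that the coefficients are exactly $1/2$), and for the indecomposability of $\Omega\setminus E$ you exploit the symmetry $-(u+\Na)=\chi_{\Omega\setminus E}/P(\Omega\setminus E,\Omega)+\Na$ rather than writing a separate decomposition as the paper does; both shortcuts are legitimate and slightly streamline the argument.
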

\begin{proof}
We start to prove that
\begin{displaymath}
Ext (B_\mathcal{N}) \subset \left\{\frac{\chi_E}{P(E, \Omega)} + \mathcal{N} : E \mbox{ simple}\right\}\,.
\end{displaymath}
Taking $u_\mathcal{N} \in Ext (B_\mathcal{N})$ and choosing $u \in BV(\Omega)$ such that $u + \Na = u_\Na$, we have clearly that $|Du|(\Omega) = 1$. We want to show that $u$ assumes two values almost everywhere. In order to do that 
 we define
\begin{displaymath}
F(s) = \int_{-\infty}^s  P(\{u(x) \leq t\}, \Omega) \, dt\,.
\end{displaymath}
We have $F(-\infty) = 0$ and by the coarea formula for BV functions,
$F(+\infty) = 1$. Moreover, the function
$t \mapsto P(\{u(x) \leq t\}, \Omega)$ is integrable on $\R$, so
there exists an $s \in \R$ such that $F(s) = \tfrac12$. Setting
\[
u_1 = 2 \min(u, s), \qquad u_2 = 2 \max(u - s, 0)
\]
we see that $u = \tfrac12 u_1 + \tfrac12 u_2$ as well as
$|Du_1|(\Omega) = |Du_2|(\Omega) = 1$, the latter again by the coarea
formula and the choice of $s$. As $u_\Na$ is an extremal point of
$B_\Na$, it follows that $u_\Na = (u_1)_\Na = (u_2)_\Na$ which means
that there exist $c_1,c_2 \in \R$ such that
$u = u_1 + c_1 = u_2 + c_2$. Now, for $x \in \Omega$ such that
$u(x) \geq s$, this implies $u(x) = 2 s + c_1$. Likewise, if
$u(x) \leq s$, then $u(x) = c_2$. Hence, $u$ assumes at most two
values almost everywhere. However, since $|Du|(\Omega) = 1$, it
assumes exactly two values almost everywhere and $2s + c_1 >
c_2$. (Moreover, the set $\{u(x) = s\}$ must be a null set.)

Up to change of the representative $u$ of $u_\mathcal{N}$, we can suppose that $u(x) \in \{0,a\}$ almost everywhere, where $a > 0$. Defining $E = \{x\in \Omega : u(x) = a\}$ and using the fact that $|Du|(\Omega) = 1$ one concludes that
$u= \frac{\chi_E}{P(E,\Omega)}$.
Suppose now by contradiction that $E$ is decomposable and let $A$ and
$B$ be the sets of finite perimeter given by Definition
\ref{dec}. Then, $P(A,\Omega) > 0$ and $P(B,\Omega) > 0$ and defining
\begin{equation*}
u_1 = \frac{\chi_A}{P(A,\Omega)} \quad \mbox{and} \quad u_2 = \frac{\chi_B}{P(B,\Omega)}\,,
\end{equation*}
we have
\begin{equation}\label{abl}
u= \frac{\chi_E}{P(E,\Omega)} = \frac{P(A,\Omega)}{P(E,\Omega)}u_1 + \frac{P(B,\Omega)}{P(E,\Omega)}u_2\, .
\end{equation}
Hence by the properties of $A$ and $B$ given by Definition \ref{dec}, Formula~\eqref{abl} is a non-trivial convex combination of $u$.

Likewise, suppose by contradiction that $\Omega \setminus E$ is decomposable and call $A$ and $B$ its decomposition according to Definition \ref{dec}. Define
\begin{equation*}
u_1 = -\frac{\chi_{A}}{P(A,\Omega)} \quad \mbox{and} \quad u_2 = \frac{1 - \chi_{B}}{P(B,\Omega)}\,. 
\end{equation*}
Notice that
\begin{equation}\label{decc}
u= \frac{\chi_E}{P(E,\Omega)} = \frac{P(A,\Omega)}{P(E,\Omega)}u_1 + \frac{P(B,\Omega)}{P(E,\Omega)}u_2\,.
\end{equation}
So using that $\{A,B\}$ is a decomposition of $\Omega \setminus E$ and the fact that $P(E,\Omega) = P(\Omega \setminus E, \Omega)$ we conclude that \eqref{decc} is a non-trivial convex combination of $u$.

Thus, $E$ must be a simple set and the first inclusion is proven.

\medskip

Let us prove now the opposite inclusion: 
\begin{displaymath}
Ext (B_\mathcal{N}) \supset \left\{\frac{\chi_E}{P(E\cap \Omega)} + \mathcal{N} : E \mbox{ simple}\right\}\,.
\end{displaymath}
Given $E \subset \Omega$ a simple set, %
let us suppose that there exists $u_1,u_2 \in BV(\Omega)$ such that $|D u_1|(\Omega) \leq 1$, $|D u_2|(\Omega) \leq 1$ and
\begin{displaymath}
\frac{\chi_E}{P(E, \Omega)} + \mathcal{N} = \lambda (u_1 + \Na) + (1-\lambda)(u_2 + \Na)\,, 
\end{displaymath}
where $\lambda \in (0,1)$. This means that there exists $c \in \R$ such that
\begin{displaymath}
\frac{\chi_E}{P(E, \Omega)} + c = \lambda u_1 + (1-\lambda)u_2\, 
\end{displaymath}
and so
\begin{displaymath}
\frac{D\chi_E}{P(E, \Omega)} =\lambda D u_1 + (1-\lambda)Du_2\,.
\end{displaymath}
Notice that for every $A \subset \Omega$ measurable one has
\begin{equation}\label{split}
\frac{|D\chi_E|(A)}{P(E,\Omega)} = \lambda |D u_1|(A) + (1-\lambda)|Du_2|(A)\, .
\end{equation}
Indeed, if there exists $A \subset \Omega$ such that $\lambda |D u_1|(A) + (1-\lambda)|Du_2|(A) > \frac{|D\chi_E|(A)}{P(E,\Omega)}$ we would arrive at the contradiction
\begin{eqnarray*}
1 & = & \frac{|D\chi_E|(\Omega)}{P(E,\Omega)} = \frac{|D\chi_E|(A)}{P(E,\Omega)}  + \frac{|D\chi_E|(A^c)}{P(E,\Omega)} < \lambda |D u_1|(A) + (1-\lambda)|Du_2|(A) + \frac{|D\chi_E|(A^c)}{P(E,\Omega)}\\
& \leq & \lambda |D u_1|(A) + (1-\lambda)|Du_2|(A) +  \lambda |D u_1|(A^c) + (1-\lambda)|Du_2|(A^c)  \leq 1\,.
\end{eqnarray*}
As derivative of the characteristic function of a set of finite
perimeter, $D\chi_E$ can only be supported on the reduced boundary
$\partial^*E$. Thus, $|D\chi_E|(E_0) = |D\chi_E|(E_1) = 0$
and~\eqref{split} gives
$|Du_1|(E_0) = |Du_2|(E_0) = |Du_1|(E_1) = |Du_2|(E_1) = 0$. Applying
Lemma~\ref{Dolz} with the indecomposable sets $E$ and $\Omega \setminus E$ then
yields that $u_i = d_i \chi_E + c_i$ for some $c_i,d_i \in \R$,
$i=1,2$. By~\eqref{split}, we further deduce
$|Du_1|(\Omega) = |Du_2|(\Omega) = 1$ which implies that
$|d_1| = |d_2| = P(\Omega,E)^{-1} > 0$. Clearly, $d_1$ and $d_2$
cannot both be negative. Also, $d_1$ and $d_2$ cannot have opposite
sign as in this case, %
comparing $|D\chi_E|(\Omega)/P(E,\Omega)$ and
$|\lambda Du_1 + (1-\lambda)Du_2|(\Omega)$ leads to the contradiction
\[
  1 = |\lambda d_1 + (1-\lambda d_2)|P(E,\Omega) < (\lambda |d_1| +
  (1-\lambda) |d_2|) P(E,\Omega) = 1\,.
\] 
Hence, $d_1 = d_2 = P(E,\Omega)^{-1}$ and 
\[
  \frac{\chi_E}{P(E, \Omega)} + c = \frac{\chi_E}{P(E, \Omega)} + c_1
 = \frac{\chi_E}{P(E, \Omega)} + c_2\,.
\]
In other words,
$\frac{\chi_E}{P(E, \Omega)} + \Na = u_1 + \Na = u_2 + \Na$, so
$\frac{\chi_E}{P(E, \Omega)} + \Na$ is indeed an extremal point.
\end{proof}

We have shown the following theorem.
\begin{thm}
\label{sparsity_tv}
If $X = BV(\Omega)$ and $\phi(u) = |Du|(\Omega)$ there exists a minimizer $\overline u \in BV(\Omega)$ of Problem~\eqref{problem}  such that
\begin{equation}
\overline{u}  = c + \sum_{i=1}^p \frac{\gamma_i}{P(E_i,\Omega)} \chi_{E_i} \, ,
\end{equation}
where $c \in \R$, $p\leq \mbox{dim}\,(H / \mathcal{A}(\R))$,  $\gamma_i >0$ with $\sum_{i=1}^p \gamma_i = |D\overline u|(\Omega)$ and each $E_i \subset \Omega$ is simple.
\end{thm}

\subsection{Radon norm of a scalar differential operator}
In this section we consider the case where $\phi(u) = \|Lu\|_{\mathcal{M}}$, namely the Radon norm of a linear, translation-invariant scalar differential operator $L$. This was already treated in \cite{unsersplines} and in \cite{exactsolutionsflinth} in different settings. Our goal is to show that our theory applies straightforwardly to this case. We start some useful properties of scalar differential operators that we are going to use.
In what follows we denote by $\alpha = (\alpha_1,\ldots,\alpha_d) \in \N^{d}$ a multi-index and we employ the standard multi-index notation and conventions.

\subsubsection{Some technical lemmas}

We consider a non-zero differential operator with linear coefficients of order $q \in \N$ of the form

\begin{equation}\label{diffop}
L = \sum_{|\alpha| \leq q} c_\alpha \partial^\alpha\,,
\end{equation}
where each $c_\alpha \in \R$ and $c_\alpha \neq 0$ for some $|\alpha|=q$. We also denote by $L^*$ the operator defined formally by 
\begin{equation}\label{adjoint}
L^* = \sum_{|\alpha| \leq q} (-1)^ {|\alpha|} c_\alpha \partial^\alpha\,.
\end{equation}

The existence of a fundamental solution $G$ for $L$ is ensured by
virtue of the classical Malgrange--Ehrenpreis theorem (see for example
Theorem 8.5 in \cite{Rudin}).

\begin{thm}[Malgrange--Ehrenpreis]
Given $L$ a non-zero differential operator with linear coefficients according to~\eqref{diffop} there exists a distribution $G \in D(\R^d)^*$ which is a fundamental solution for $L$, namely
\begin{equation}\label{funddirac}
LG = \delta_0 \quad \mbox{ in } D(\R^d)^*\,.
\end{equation}
\end{thm}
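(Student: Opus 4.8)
This is the classical Malgrange--Ehrenpreis theorem, which may simply be quoted from \cite{Rudin}; we nevertheless outline how one would prove it directly. The plan is to pass to the Fourier side, where the equation $LG=\delta_0$ becomes a division problem, and to solve that problem by deforming the integration contour in $\mathbb{C}^d$ away from the zero set of the symbol. Since $L^*=\sum_{|\alpha|\leq q}(-1)^{|\alpha|}c_\alpha\partial^\alpha$ has symbol $P$, a polynomial in $\xi$ whose principal part is a nonzero constant multiple of $\sum_{|\alpha|=q}c_\alpha\xi^\alpha$ (nonzero because $c_\alpha\neq 0$ for some $|\alpha|=q$), the polynomial $P$ has degree exactly $q$ and $\widehat{L^*\phi}=P\,\hat\phi$ for every $\phi\in D(\R^d)$, with both sides extending to entire functions of $\zeta\in\mathbb{C}^d$. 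It therefore suffices to produce $G\in D(\R^d)^*$ whose action on $\phi$ can be written as an absolutely convergent integral of $\hat\phi/P$ over a contour avoiding $\{P=0\}$: then $\langle LG,\phi\rangle=\langle G,L^*\phi\rangle$ will make the factor $P$ cancel, leaving the integral of the entire function $\hat\phi$ over that contour, and Fourier inversion will recover $\phi(0)$.

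The key elementary ingredient is a lower bound for $P$ off its zeros: there exist $C>0$ and $R>0$ such that for every $\xi\in\R^d$ there is $\eta\in\R^d$ with $|\eta|\leq R$ and $|P(\xi+i\eta)|\geq C$. To obtain it, I would fix a real unit vector $e$ with $\sum_{|\alpha|=q}c_\alpha e^\alpha\neq 0$ and restrict $P$ to the complex line $z\mapsto P(\xi+ze)$: this is a one-variable polynomial of degree exactly $q$ whose leading coefficient does not depend on $\xi$, so by Jensen's formula (equivalently, by the sub-mean-value property of $z\mapsto\log|P(\xi+ze)|$ on the unit disc) its modulus is $\geq C$ at some $z_0$ with $|z_0|\leq 1$; taking $\eta=(\operatorname{Im}z_0)e$ and replacing $\xi$ by $\xi+(\operatorname{Re}z_0)e$ yields the claim with $R=1$. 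Since $\R^d$ is paracompact and the open sets $U_\eta=\{\xi:|P(\xi+i\eta)|>C/2\}$ with $|\eta|\leq R$ cover it, there is a locally finite refinement $(V_k)_k$ with $V_k\subset U_{\eta_k}$ and a subordinate smooth partition of unity $(\chi_k)_k$ with $\chi_k\geq 0$, $\sum_k\chi_k\equiv 1$, $\operatorname{supp}\chi_k\subset V_k$. One then sets
\[
\langle G,\phi\rangle=\frac{1}{(2\pi)^d}\int_{\R^d}\sum_k\chi_k(\xi)\,\frac{\hat\phi(\xi+i\eta_k)}{P(\xi+i\eta_k)}\,d\xi\,.
\]
Because $\phi$ has compact support, $\hat\phi$ is entire with Paley--Wiener bounds $|\hat\phi(\xi+i\eta)|\leq C_N(1+|\xi|)^{-N}$ uniformly for $|\eta|\leq R$, while $|P(\xi+i\eta_k)|\geq C/2$ on $\operatorname{supp}\chi_k$; together with $\sum_k\chi_k\equiv 1$ this makes the integral absolutely convergent and $\phi\mapsto\langle G,\phi\rangle$ continuous, so $G$ is a well-defined distribution.

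It then remains to check $LG=\delta_0$. Using $\langle LG,\phi\rangle=\langle G,L^*\phi\rangle$ and $\widehat{L^*\phi}=P\,\hat\phi$, the symbol cancels and one is left with $(2\pi)^{-d}\int_{\R^d}\sum_k\chi_k(\xi)\,\hat\phi(\xi+i\eta_k)\,d\xi$, which is the integral of the entire function $\hat\phi$ over the ``twisted contour'' obtained by gluing, via the partition of unity, the pieces of the translates $\R^d+i\eta_k$. Since all the shifts $\eta_k$ lie in the contractible ball $\{|\eta|\leq R\}$ and $\hat\phi$ decays rapidly on every horizontal strip, this contour is homotopic to $\R^d$ through a family along which $\hat\phi$ stays holomorphic with integrable decay, so Stokes' theorem leaves the value of the integral unchanged, equal to $(2\pi)^{-d}\int_{\R^d}\hat\phi=\phi(0)$ by Fourier inversion; hence $LG=\delta_0$ in $D(\R^d)^*$. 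The main obstacle is the rigorous execution of this last contour-deformation step: done naively, term by term, it produces error contributions supported on the overlaps and weighted by $\bar\partial\chi_k$, and one must verify that these cancel in pairs (equivalently, set up the Stokes computation on the region swept between $\R^d$ and the twisted contour, with the Paley--Wiener decay killing the terms at infinity). Alternatively, one may invoke Hörmander's treatment of this step, or bypass the construction entirely via the Łojasiewicz--Hörmander division theorem (a nonzero polynomial divides $1$ in $\mathcal{S}'(\R^d)$), or via the Ortner--Wagner closed-form fundamental solution --- a finite linear combination, with coefficients $\binom{q}{j}(-1)^{j+1}$ and normalisation $1/\overline{P_q(2\eta)}$, of terms $e^{2j\,\eta\cdot x}\,\mathcal{F}^{-1}\!\big[\,\overline{P(i\xi+\eta)}/|P(i\xi+\eta)|^2\cdot(\text{polynomial})\,\big]$ --- whose telescoping structure reduces $LG=\delta_0$ to a direct computation.
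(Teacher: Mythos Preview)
The paper does not prove this statement; it merely quotes Theorem~8.5 in \cite{Rudin}, exactly as your opening sentence does. For the purposes of the paper, that sentence alone already matches what is required.

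Your additional outline, however, has a genuine gap. The distribution you write down,
\[
\langle G,\phi\rangle=\frac{1}{(2\pi)^d}\int_{\R^d}\sum_k\chi_k(\xi)\,\frac{\hat\phi(\xi+i\eta_k)}{P(\xi+i\eta_k)}\,d\xi,
\]
does \emph{not} satisfy $LG=\delta_0$: after the symbol cancels you are left with $(2\pi)^{-d}\int\sum_k\chi_k(\xi)\,\hat\phi(\xi+i\eta_k)\,d\xi$, and this is not $\phi(0)$. Already for $d=1$ with two bumps $\chi_1+\chi_2\equiv 1$ and shifts $\eta_1\neq\eta_2$, one integration by parts gives the discrepancy $-i\int\chi_1'(\xi)\int_{\eta_2}^{\eta_1}\hat\phi(\xi+is)\,ds\,d\xi$, which is nonzero for generic $\phi$ (e.g.\ a Gaussian). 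The expression $\sum_k\chi_k(\xi)\,\hat\phi(\xi+i\eta_k)\,d\xi$ is not the pullback of the holomorphic $d$-form $\hat\phi(\zeta)\,d\zeta_1\wedge\cdots\wedge d\zeta_d$ along any map into $\mathbb{C}^d$, so Stokes' theorem does not apply and the error terms do not ``cancel in pairs'' as you hope. To make this route work you must either use a single smooth shift $\eta(\xi)=\sum_k\chi_k(\xi)\eta_k$ and include the Jacobian $\det\bigl(I+iD\eta(\xi)\bigr)$ in the definition of $G$, or replace the smooth partition of unity by a genuine piecewise-constant staircase whose vertical connecting faces carry the Cauchy boundary terms. (A smaller slip: your Jensen argument yields $|P(\xi+z_0e)|\geq C$ at some $z_0$ in the closed unit disc that need not be purely imaginary, so it bounds $|P|$ at $\xi+(\operatorname{Re}z_0)e+i(\operatorname{Im}z_0)e$ rather than at $\xi+i\eta$; since $z_0$ depends on $\xi$, ``replacing $\xi$'' is circular. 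The standard fix is pigeonhole on $t\in\{0,1,\ldots,q\}$ for the degree-$q$ polynomial $t\mapsto P(\xi+ite)$.)
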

Let $\Omega \subset \R^d$ be a non-empty open and bounded set.
Let us define the operator $T : \mathcal{M}(\Omega) \rightarrow D(\R^d)^*$ as $T\mu = \widetilde \mu \star G$, where
\begin{displaymath}
\widetilde \mu (A) = \mu (\Omega \cap A)
\end{displaymath} 
for every Borel set $A\subset \R^d$ and $\star$ denotes the convolution of a compactly supported distribution and a distribution. 
Notice that $T$ is indeed well-defined because $\widetilde \mu$ is compactly supported on $\R^d$ and $\widetilde \mu \star G \in D(\R^d)^*$. 
Define then $T_\Omega : \mathcal{M}(\Omega) \rightarrow D(\Omega)^*$ as $T_\Omega(\mu) = (T\mu)_{|_{\Omega}}$.

\begin{rmk}\label{involution}
Notice that $LT_\Omega\mu = \mu$ for every $\mu \in \mathcal{M}(\Omega)$. Indeed,
\begin{equation}
LT_\Omega\mu = L(T \mu)_{\vert_{\Omega}} = (LT\mu)_{\vert_{\Omega}} = (L(\widetilde \mu\star G))_{\vert_{\Omega}} = (\widetilde \mu\star LG)_{\vert_{\Omega}} = \mu\,,
\end{equation}
where in the last equality we use \eqref{funddirac}.
\end{rmk}

\begin{lemma}\label{neworder}
There exists $C \in \R$ and $s \in \N$ such that for every $\mu\in \mathcal{M}(\Omega)$ one has
\begin{displaymath}
|(T_\Omega \mu)(\varphi)| \leq C\|\mu\|_{\mathcal{M}}\sup\{|\partial^\alpha \varphi(x)| : x \in \Omega, \ |\alpha|\leq s\}
\end{displaymath}
for every test function $\varphi\in D(\Omega)$. In particular, for
each $\mu \in \M(\Omega)$, $T_\Omega\mu$ can be extended to a unique element
in $C^s_0(\Omega)^*$ such that $T_\Omega: \M(\Omega) \to C_0^s(\Omega)^*$ becomes
a linear and continuous mapping.
\end{lemma}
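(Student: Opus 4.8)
The plan is to reduce the bound to the elementary fact that a compactly supported distribution has finite order. First I would make the action of $T_\Omega\mu$ on a test function explicit: by the definition of the convolution of the compactly supported distribution $\widetilde\mu$ with $G$, for every $\varphi\in D(\Omega)$ one has
\[
  (T_\Omega\mu)(\varphi) = (\widetilde\mu\star G)(\varphi) = G(\psi_{\mu,\varphi}),
  \qquad
  \psi_{\mu,\varphi}(y) := \int_\Omega \varphi(x+y)\,d\mu(x).
\]
The function $\psi_{\mu,\varphi}$ is smooth, and differentiation under the integral sign (legitimate since $\varphi$ has bounded derivatives of every order and $\mu$ is finite) gives $\partial^\beta\psi_{\mu,\varphi} = \psi_{\mu,\partial^\beta\varphi}$. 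Moreover its support is contained in the fixed compact set $K := \overline{\Omega - \Omega}$, \emph{independently of $\mu$ and $\varphi$}: if $\psi_{\mu,\varphi}(y)\neq 0$ then $x+y\in\Omega$ for some $x\in\overline\Omega$, so $y\in\Omega-\overline\Omega\subset K$. Hence $\psi_{\mu,\varphi}\in D(\R^d)$ and the pairing with $G$ is well-defined.

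Next I would localise $G$ near $K$. Choose $\chi\in D(\R^d)$ with $\chi\equiv 1$ on an open neighbourhood of $K$; then $G(\psi_{\mu,\varphi}) = (\chi G)(\psi_{\mu,\varphi})$ since $\chi\psi_{\mu,\varphi} = \psi_{\mu,\varphi}$. The distribution $\chi G$ has compact support, hence finite order (see, e.g., \cite{Rudin}): there are $m\in\N$ and $C_1 > 0$ with
\[
  |(\chi G)(\psi)| \leq C_1 \sum_{|\beta|\leq m} \sup_{x\in\R^d} |\partial^\beta\psi(x)|
  \qquad\text{for all }\psi\in D(\R^d).
\]
Applying this to $\psi = \psi_{\mu,\varphi}$, using $\partial^\beta\psi_{\mu,\varphi} = \psi_{\mu,\partial^\beta\varphi}$ and the crude bound $|\psi_{\mu,\partial^\beta\varphi}(y)| \leq \int_\Omega |\partial^\beta\varphi(x+y)|\,d|\mu|(x) \leq \|\mu\|_{\M}\,\sup_{z\in\Omega}|\partial^\beta\varphi(z)|$, we obtain
\[
  |(T_\Omega\mu)(\varphi)| \leq C_1\,\#\{\beta\in\N^d : |\beta|\leq m\}\,\|\mu\|_{\M}\,\sup\{|\partial^\alpha\varphi(x)| : x\in\Omega,\ |\alpha|\leq m\}.
\]
This is the asserted estimate with $s := m$ and $C := C_1\,\#\{\beta\in\N^d : |\beta|\leq m\}$; note that $s$ and $C$ depend only on $L$ and $\Omega$ (through $K$ and $\chi$), not on $\mu$.

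For the ``in particular'' statement I would use that $D(\Omega)$ is dense in $C_0^s(\Omega)$ endowed with the norm $\varphi\mapsto\sup\{|\partial^\alpha\varphi(x)| : x\in\Omega,\ |\alpha|\leq s\}$. For fixed $\mu$, the inequality just proved says that $T_\Omega\mu$ is a bounded linear functional on the dense subspace $D(\Omega)\subset C_0^s(\Omega)$, so it extends uniquely to an element of $C_0^s(\Omega)^*$ with $\|T_\Omega\mu\|_{C_0^s(\Omega)^*}\leq C\|\mu\|_{\M}$. Since $\mu\mapsto\widetilde\mu$, convolution with $G$, and restriction to $\Omega$ are all linear, $T_\Omega:\M(\Omega)\to C_0^s(\Omega)^*$ is linear, and the same bound is precisely its continuity.

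The only genuinely nontrivial point — and the reason the localisation step is needed — is that the order $s$ and the constant $C$ must be uniform in $\mu$. This is guaranteed because, after replacing $G$ by the compactly supported distribution $\chi G$, its finite order $m$ is a fixed number, and because the supports of all the auxiliary functions $\psi_{\mu,\varphi}$ lie in the single compact set $K$, so one cutoff $\chi$ works simultaneously for every admissible $\mu$ and $\varphi$.
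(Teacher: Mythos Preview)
Your proof is correct and follows the same strategy as the paper: write $(T_\Omega\mu)(\varphi)$ as $G$ applied to the auxiliary function $y\mapsto\int\varphi(x+y)\,d\mu(x)$, use that $G$ has finite order on the fixed compact set containing all the supports, and bound the derivatives of this auxiliary function by $\|\mu\|_\M$ times the $C^s$-seminorm of $\varphi$. The only difference is cosmetic: you make the finite-order step explicit via a cutoff $\chi$ and the compactly supported distribution $\chi G$, whereas the paper simply invokes the fact that a distribution has locally finite order on bounded sets.
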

\begin{proof}
Consider a test function $\varphi \in D(\Omega)$ and denote by $\widetilde \varphi$ its zero extension to $\R^d$. Then, the order of $G$ is finite on bounded sets which means that there exists $s\in \N$ such that
\begin{eqnarray*}
|(T_\Omega \mu)(\varphi)|= |T \mu (\widetilde{\varphi})| &=& |(\widetilde \mu \star G) (\varphi)|  = \left|G \left(x\mapsto\int_{\R^d} \widetilde{\varphi}(x+y) \, d\widetilde \mu(y)\right)\right| \\
&\leq & C \sup\left\{|\partial^\alpha \psi(x)| : x \in \R^d, \ |\alpha|\leq s\right\} \, ,
\end{eqnarray*}
where we set
\begin{displaymath}
\psi(x) = \int_{\R^d} \widetilde{\varphi}(x+y) \, d\widetilde \mu(y)\,,
\end{displaymath}
whose support is contained in a compact set that only depends on $\Omega$.
Notice now that for every $x \in \R^d$ we have
\begin{equation*}
|\partial^\alpha \psi(x)| = \left| \int_{\R^d} \partial^\alpha \widetilde{\varphi}(x+y) \, d\widetilde \mu(y)\right|\leq \|\mu\|_{\mathcal{M}} \sup_{x \in \Omega} |\partial^\alpha \varphi(x)|\,.
\end{equation*}
So 
\begin{equation*}
|(T_\Omega \mu)(\varphi)| \leq C \|\mu\|_{\mathcal{M}} \sup\{|\partial^\alpha \varphi(x)| : x \in \Omega, \ |\alpha|\leq s\}\,,
\end{equation*}
meaning that $T_\Omega \mu$ can be extended, by density, to an element
in $C_0^s(\Omega)^*$. The latter also establishes the claimed
continuity of $T_\Omega: \M(\Omega) \to C_0^s(\Omega)^*$.
\end{proof}

\subsubsection{Existence of a sparse minimizer}

Recall that we consider the differential operator $L$ given in Equation \eqref{diffop}. With $s$ is given by Lemma \ref{neworder}, we set $X = C_0^s(\Omega)^*$, the space of distributions of order $s$ equipped with the weak* topology. From now on we consider the weak differential operator $L$ mapping between $X \rightarrow C_0^{s+q}(\Omega)^*$. Notice that with this definition, $L$ is a continuous operator when $X$ and $C_0^{s+q}(\Omega)^*$ are equipped with the weak* topology. Indeed, the adjoint $L^*$ according to~\eqref{adjoint} maps continuously between the spaces $C_0^{s+q}(\Omega) \to C_0^s(\Omega)$ as a classical differential operator. Thus, considering $u_n \stackrel{*}{\rightharpoonup} u$ in $X$ and $\varphi \in C_0^{s+q}(\Omega)$ we have $L^*\varphi \in C_0^{s}(\Omega)$ and hence,
\begin{equation}
\lim_{n\rightarrow + \infty} Lu_n(\varphi) = \lim_{n\rightarrow + \infty} u_n(L^*\varphi) = Lu(\varphi)\,,
\end{equation}
which establishes the weak*-continuity as due to separability of $C_0^s(\Omega)$ and $C_0^{s+q}(\Omega)$, it suffices to consider sequences.

\medskip
We then define the following functional $\phi : X \rightarrow [0, +\infty]$: 
\begin{equation}\label{defseminorm}
\phi(u) := \left\{ \begin{array}{ll}
\|Lu\|_{\mathcal{M}} & \mbox{ if } \|Lu\|_{\mathcal{M}}< +\infty\\
+\infty & \mbox{ otherwise }\,.
\end{array} 
\right.
\end{equation}

\begin{rmk}\label{lscweakstar}
Notice that $\phi$ is a seminorm and it is lower semi-continuous in $X$ (with respect to the weak* topology). Indeed, once again, as $C_0^s(\Omega)$ is separable we know that weak* lower semi-continuity for $L$ is equivalent to weak* sequential lower semi-continuity. Therefore, we consider a sequence $(u_n)_n \subset X$ such that $u_n \stackrel{*}{\rightharpoonup} u$ in $X$ and we suppose without loss of generality that 
\begin{equation*}
\liminf_{n\rightarrow + \infty} \|Lu_n\|_{\mathcal{M}} < +\infty \quad \mbox{and} \quad \lim_{n\rightarrow +\infty} \|Lu_n\|_{\mathcal{M}}  = C\,.
\end{equation*}
Then, by weak* sequential compactness of measures there exists $v \in \mathcal{M}(\Omega)$ such that, up to subsequences, $Lu_n \stackrel{*}{\rightharpoonup} v$ in $\mathcal{M}(\Omega)$ and in particular in $C_0^{s+q}(\Omega)^*$. As $L$ is weak*-weak* closed we infer that $v = Lu$ and from the lower semi-continuity of the Radon norm with respect to weak* convergence in $\mathcal{M}(\Omega)$ we conclude that $\|Lu\|_{\mathcal{M}} \leq C$.

\end{rmk} 

In order to apply Theorem \ref{maint},
it remains to verify Assumption \ref{H1}. This is the content of the next proposition. We remind that $X_\mathcal{N} = X + \mathcal{N}$ (equipped with the quotient of the weak* topology of $X$) where $\mathcal{N}$ is the null-space of $L$ and $\phi_\mathcal{N}(u_\mathcal{N}) = \phi_\mathcal{N}(u + \mathcal{N}) := \phi(u)$ (for notational convenience we denote by $L$ the operator acting on $X_\mathcal{N}$ in the natural way).

\begin{prop}\label{par}
The sublevel sets of $\phi_{\mathcal{N}}$, i.e.,
\begin{equation*}
S^-(\phi_{\mathcal{N}},\alpha):=\{u_\mathcal{N} \in X_{\mathcal{N}} : \phi_{\mathcal{N}}(u_\mathcal{N})\leq \alpha\}\,,
\end{equation*}
are compact for every $\alpha > 0$.
\end{prop}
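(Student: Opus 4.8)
The plan is to derive Hypothesis [$\textbf{H1}$] directly from Remark~\ref{coerclsc}: since $\phi$ is weak* lower semi-continuous by Remark~\ref{lscweakstar}, so is $\phi_{\mathcal{N}}$, and it therefore suffices to exhibit, for each $\alpha > 0$, a single weak*-compact subset of $X_{\mathcal{N}}$ containing $S^-(\phi_{\mathcal{N}},\alpha)$. The main tool is the bounded right inverse $T_\Omega$ of $L$ on measures furnished by Lemma~\ref{neworder} and Remark~\ref{involution}. This route avoids the metrizability subtleties that forced the net argument in Lemma~\ref{netlemma}.

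First I would record the two quantitative facts we need. By Lemma~\ref{neworder}, $T_\Omega$ maps the ball $B_\alpha := \{\mu \in \mathcal{M}(\Omega) : \|\mu\|_{\mathcal{M}} \leq \alpha\}$ into the ball $K_\alpha := \{u^* \in C_0^s(\Omega)^* : \|u^*\|_{C_0^s(\Omega)^*} \leq C\alpha\}$, where $C$ is the constant of that lemma; and by Remark~\ref{involution}, $L T_\Omega \mu = \mu$ for every $\mu \in \mathcal{M}(\Omega)$.

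Next, take $u_{\mathcal{N}} \in S^-(\phi_{\mathcal{N}},\alpha)$ and pick any representative $u \in X$, so that $\|Lu\|_{\mathcal{M}} = \phi(u) = \phi_{\mathcal{N}}(u_{\mathcal{N}}) \leq \alpha$ by the definitions \eqref{defseminorm} and \eqref{mapquot} (recall $\phi$ is constant on cosets of $\mathcal{N}$). In particular, the finiteness of the Radon norm together with the Riesz representation theorem and the density of $D(\Omega)$ in $C_0(\Omega)$ shows that $\mu := Lu$ defines an element of $\mathcal{M}(\Omega)$ lying in $B_\alpha$. Testing against $\varphi \in D(\Omega)$ and using Remark~\ref{involution} gives $\langle L(u - T_\Omega\mu),\varphi\rangle = \langle\mu,\varphi\rangle - \langle\mu,\varphi\rangle = 0$, and by density $L(u - T_\Omega\mu) = 0$, i.e. $u - T_\Omega\mu \in \mathcal{N}$. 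Hence $u_{\mathcal{N}} = \pi_{\mathcal{N}}(T_\Omega\mu)$, and we obtain the inclusion
\[
  S^-(\phi_{\mathcal{N}},\alpha) \subseteq \pi_{\mathcal{N}}\big(T_\Omega(B_\alpha)\big) \subseteq \pi_{\mathcal{N}}(K_\alpha).
\]
Finally, since $C_0^s(\Omega)$ is a separable Banach space whose weak* topology is precisely the topology of $X$, the Banach--Alaoglu theorem makes $K_\alpha$ weak*-compact in $X$, and continuity of the canonical projection $\pi_{\mathcal{N}}$ makes $\pi_{\mathcal{N}}(K_\alpha)$ compact in $X_{\mathcal{N}}$. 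Thus $S^-(\phi_{\mathcal{N}},\alpha)$ is contained in a compact set, and Remark~\ref{coerclsc} concludes.

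The only genuinely delicate point, which I expect to be the main obstacle to write carefully, is the identification of the condition ``$\|Lu\|_{\mathcal{M}} < +\infty$'' built into \eqref{defseminorm} with ``$Lu$ is the restriction to $C_0^{s+q}(\Omega)$ of a finite Radon measure on $\Omega$'': it is exactly this that makes $\mu = Lu$ an admissible argument for Remark~\ref{involution} and places it in $B_\alpha$. Everything else — the boundedness estimate, Banach--Alaoglu, and continuity of $\pi_{\mathcal{N}}$ — is routine bookkeeping.
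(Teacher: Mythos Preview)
Your proof is correct and follows essentially the same approach as the paper: both establish that every $u_{\mathcal{N}}$ in the sublevel set admits the representative $T_\Omega(Lu)$ lying in the norm ball $K_\alpha$ (via Lemma~\ref{neworder} and Remark~\ref{involution}), and then invoke Banach--Alaoglu on that ball. The only difference is packaging: the paper runs the compactness argument directly with nets, whereas you record the inclusion $S^-(\phi_{\mathcal{N}},\alpha)\subseteq\pi_{\mathcal{N}}(K_\alpha)$ and appeal to Remark~\ref{coerclsc}, which is a bit cleaner but not a genuinely different route.
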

\begin{proof}
Similarly to the proof of Lemma \ref{netlemma} we employ \emph{nets} since metrizability of the space $X_\mathcal{N}$ does not play a role in this context.

Given $u_\mathcal{N} \in S^-(\phi_\Na,\alpha)$ we have thanks to Remark~\ref{involution} that
\begin{displaymath}
LT_\Omega Lu_{\mathcal{N}} = Lu_\mathcal{N}\,.
\end{displaymath}
Therefore, there exists $\psi \in \mathcal{N}$ such that $u + \psi = T_\Omega Lu_\Na$, where $u \in X$ is such that $u + \Na = u_\Na$.
Moreover, with the help of Lemma~\ref{neworder}, it follows that
\begin{equation*}
|(u + \psi)(\varphi)| = |(T_\Omega Lu_\Na)(\varphi)| \leq C\|Lu_\Na\|_{\mathcal{M}} \sup\,\{|\partial^\alpha \varphi(x)| : x \in \Omega, \ |\alpha|\leq s\}
\end{equation*}
for every $\varphi \in C_0^s(\Omega)$. Hence,
\begin{equation}\label{boundquot}
\inf_{\psi \in \mathcal{N}}\|u + \psi\|_{X} \leq C\|Lu_\mathcal{N}\|_{\mathcal{M}}\,.
\end{equation}
Consider now a net $(u_\mathcal{N}^\beta)_\beta \subset S^-(\phi_{\mathcal{N}},\alpha)$. Since $\phi_\Na(u_\Na) = \|Lu_\Na\|_\M$, we have
\begin{equation*}
\inf_{\psi \in \mathcal{N}}\|u^\beta + \psi\|_{X} \leq C\alpha\,.
\end{equation*}
for every $\beta$. Thus, there exists a net
$(\psi^\beta)_\beta$ in $\mathcal{N}$ and $\tilde C > 0$ such that
\begin{equation*}
\|u^\beta + \psi^\beta\|_{X}  \leq \tilde C\,.
\end{equation*}
Applying the Banach--Alaoglu theorem we extract a subnet (not relabelled) of $(u^\beta + \psi^\beta)_\beta$ that is converging to $u \in X$ in the weak* topology of $X$ (Theorem 1.6.2 in \cite{pedersen}). As the projection on the quotient is a continuous operation we deduce also that 
\begin{equation*}
u^\beta + \psi^\beta + \mathcal{N} \rightarrow u + \mathcal{N} = u_\Na \qquad \mbox{ in }\ X_\mathcal{N}\,,
\end{equation*}
(Proposition 1.4.3 in \cite{pedersen}).
It remains to show that $u_\mathcal{N} \in S^{-}(\phi_\mathcal{N} , \alpha)$.
Thanks to Remarks \ref{lscweakstar} and~\ref{coerclsc}, the functional $\phi_\Na  : X_\Na \rightarrow [0,\infty]$ is lower semi-continuous with respect to the quotient topology in $X_\Na$ and therefore, its sublevel sets are closed. This implies that $u_\Na \in S^{-}(\phi_{\mathcal{N}},\alpha)$ (Proposition 1.3.6 in \cite{pedersen}). 
\end{proof}

We are now in position to apply Theorem \ref{maint}. Consider $\mathcal{A} : X \rightarrow H$ a linear continuous operator such that \ref{H0} holds and $F:H \rightarrow (-\infty,+\infty]$ satisfying the assumptions in Section \ref{assumptions}.  

We set the following variational problem:
\begin{equation}\label{specific}
\inf_{u \in C_0^s(\Omega)^*} \|Lu\|_\M + F(\mathcal{A}u)\,.
\end{equation}
Thanks to Proposition \ref{par}, Theorem \ref{maint} is applicable. We can furthermore characterize the extremal points of the ball associated to $\phi_\mathcal{N}$ according to the following theorem. Note that a similar result was also obtained by \cite{unsersplines} and  \cite{exactsolutionsflinth} in different settings and more restrictive hypotheses. For this purpose, for $x \in \R^d$, denote by $G_x$ the fundamental solution $G$ translated by $x$, i.e., such that $LG_x = \delta_x$. 

\begin{thm}
\label{sparsity_diffop}
There exists $\overline u \in C_0^s(\Omega)^*$ a minimizer of \eqref{specific} with the following representation:
\begin{equation}\label{null2}
\overline{u}  = \overline{\psi} + \sum_{i=1}^p \gamma_i G_{x_i} \, ,
\end{equation}
where $\overline{\psi} \in C_0^s(\Omega)^*$ with $L\overline{\psi} = 0$, 
 $p\leq \mbox{dim}\,H_\mathcal{N}$, $x_1,\ldots,x_p \in \Omega$, and $\gamma_1,\ldots,\gamma_p \in \R \setminus \{0\}$ with $\sum_i |\gamma_i| = \|L\overline u\|_{\mathcal{M}}$.
\end{thm}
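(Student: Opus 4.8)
The plan is to apply the abstract Theorem~\ref{maint} to~\eqref{specific} and then to turn the resulting extremal points of $B_\Na$ into translates of the fundamental solution~$G$. First I would check the standing hypotheses of Section~\ref{assumptions}: $\phi$ is a weak* lower semi-continuous seminorm by Remark~\ref{lscweakstar}, the operator $\A$ is assumed to satisfy [\textbf{H0}], and [\textbf{H1}] for $\phi_\Na$ is precisely Proposition~\ref{par}. Consequently Theorem~\ref{maint} applies and yields a minimizer $\overline u$ of~\eqref{specific} of the form $\overline u = \overline\psi_0 + \sum_{i=1}^p \gamma_i u_i$, with $\overline\psi_0 \in \Na = \ker L$, $p \le \dim H_\Na$, $\gamma_i > 0$, $\sum_{i=1}^p \gamma_i = \phi(\overline u) = \|L\overline u\|_\M$, and $u_i + \Na \in \mathrm{Ext}(B_\Na)$.

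The heart of the argument is the identification $\mathrm{Ext}(B_\Na) = \{\sigma\, G_x + \Na : x \in \Omega,\ \sigma \in \{-1,1\}\}$, where $G_x := T_\Omega\delta_x$ (the restriction to $\Omega$ of the fundamental solution $G$ translated by $x$); note that $G_x \in X = C_0^s(\Omega)^*$ by Lemma~\ref{neworder} and $LG_x = \delta_x$ by Remark~\ref{involution}. I would regard $L$ as an injective linear map $L : X_\Na \to C_0^{s+q}(\Omega)^*$ (injective because $Lu = 0$ exactly when $u \in \Na$) and first show that $L(B_\Na)$ coincides with the unit ball $B_\M := \{v \in \M(\Omega) : \|v\|_\M \le 1\}$: the inclusion $L(B_\Na) \subseteq B_\M$ follows from $\phi_\Na(u_\Na) = \|Lu\|_\M$, and conversely, for $v \in B_\M$ the element $T_\Omega v$ lies in $X$ and in $B_\Na$ by Lemma~\ref{neworder}, with $L\,T_\Omega v = v$ by Remark~\ref{involution}. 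Since $L$ is injective, part~ii) of Lemma~\ref{fund} gives $\mathrm{Ext}(B_\M) = \mathrm{Ext}(L(B_\Na)) = L(\mathrm{Ext}(B_\Na))$, hence $\mathrm{Ext}(B_\Na) = L^{-1}(\mathrm{Ext}(B_\M))$, and Proposition~\ref{dirac} identifies $\mathrm{Ext}(B_\M)$ with $\{\sigma\delta_x : x \in \Omega,\ \sigma \in \{-1,1\}\}$; since $L(\sigma G_x + \Na) = \sigma\delta_x$, this produces the asserted description of $\mathrm{Ext}(B_\Na)$.

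Finally I would substitute this description into the representation of Theorem~\ref{maint}: writing $u_i + \Na = \sigma_i G_{x_i} + \Na$, i.e.\ $u_i = \sigma_i G_{x_i} + \psi_i$ with $\psi_i \in \Na$, and setting $\overline\psi := \overline\psi_0 + \sum_{i=1}^p \gamma_i \psi_i \in \Na$ and $\widetilde\gamma_i := \sigma_i \gamma_i \in \R \setminus \{0\}$, one gets $\overline u = \overline\psi + \sum_{i=1}^p \widetilde\gamma_i G_{x_i}$. After merging the terms for which the $x_i$ coincide and discarding any vanishing coefficient we may assume the $x_i$ pairwise distinct; then $L\overline u = \sum_i \widetilde\gamma_i \delta_{x_i}$, so $\sum_i|\widetilde\gamma_i| = \|L\overline u\|_\M$, and the number of remaining points is still at most $p \le \dim H_\Na$, which is exactly~\eqref{null2}.

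The step I expect to be the main obstacle is the identification of $\mathrm{Ext}(B_\Na)$ in the second paragraph, namely establishing that $L$ maps $B_\Na$ onto the entire unit ball of $\M(\Omega)$ and transporting the extremal-point structure through $L$, which is injective but not surjective; this is where the involution property of Remark~\ref{involution} and the continuity bound of Lemma~\ref{neworder} do the essential work. Once $\mathrm{Ext}(B_\Na)$ is pinned down, the remaining steps are routine bookkeeping with the $\Na$-components and the coefficients.
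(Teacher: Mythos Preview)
Your proof is correct and follows essentially the same route as the paper: verify the hypotheses, invoke Theorem~\ref{maint}, identify $\mathrm{Ext}(B_\Na)$ via Lemma~\ref{fund}(ii) together with Proposition~\ref{dirac}, and unpack the representation. The only cosmetic difference is the direction in which Lemma~\ref{fund}(ii) is applied: the paper uses the injective map $\pi_\Na \circ T_\Omega : \M(\Omega) \to X_\Na$ to push $\mathrm{Ext}(B_\M)$ forward, whereas you use its inverse $L : X_\Na \to C_0^{s+q}(\Omega)^*$ to pull it back; since these maps are mutual inverses between $B_\M$ and $B_\Na$ (by Remark~\ref{involution} and Lemma~\ref{neworder}), the two arguments are interchangeable.
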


\begin{proof}
With $\pi_\Na$ denoting the quotient map $X \to X_\Na$, we have due to Remark~\ref{involution} that
\begin{eqnarray*}
B_\mathcal{N} & =  & \{u_\mathcal{N} \in X_\mathcal{N}: \|Lu_\Na\|_\mathcal{M} \leq 1\} \\
&=& \pi_{\mathcal{N}}(\{u\in X: \|Lu\|_{\mathcal{M}}\leq 1\}) \\
&=& (\pi_{\mathcal{N}} \circ T_{\Omega})(\{\mu \in \M(\Omega) : \|\mu\|_{\mathcal{M}} \leq 1\})\,.
\end{eqnarray*}
Notice that $\pi_\mathcal{N} \circ T_{\Omega} : \mathcal{M}(\Omega) 
\rightarrow X_\mathcal{N}$ is a linear, injective map.
Indeed, let us suppose that $(\pi_\mathcal{N} \circ T_{\Omega})(\mu)  = 0$. Then there exists $\psi \in \mathcal{N}$ such that $T_\Omega\mu = \psi$. Applying $L$ on both sides and using Remark \ref{involution} we deduce that $\mu = 0$.

Hence we can apply part \ref{fund_ii} of Lemma \ref{fund} to obtain
\begin{equation*}
Ext(B_\mathcal{N}) = (\pi_\mathcal{N} \circ T_\Omega) Ext( \{\mu \in \mathcal{M}(\Omega): \|\mu\|_{\mathcal{M}} \leq 1\})
\end{equation*}
and by Proposition \ref{dirac},
\begin{equation*}
Ext(B_\mathcal{N}) = (\pi_\mathcal{N} \circ T_\Omega) \{\sigma \delta_x : x \in \Omega,\ \sigma \in \{-1,1\}\}\,.
\end{equation*}
So applying Theorem \ref{maint} and noting that $T_\Omega \delta_x = G_x$ one concludes.
\end{proof}

\section{Conclusions and open problems}

The abstract main result of this paper contained in Theorem \ref{maint} about the structure of a minimizer of a variational problem with finite dimensional data appears to be widely applicable, thanks to its generality. The usability of this theorem %
to concrete problems relies, however, on the characterization of the extremal points of the unit ball associated with the given regularizer. Such a characterization appears to be fundamental for devising suitable algorithms that rely on the structure of the minimizers given by Theorem \ref{maint}.

In this paper we essentially carried out this characterization
for two specific regularizers:
\begin{itemize}
\item The total variation of a function with bounded variation.
\item The Radon norm of a scalar differential operator.
\end{itemize} 
In the meantime, a follow-up paper also provides the characterization of extremal points for the Benamou--Brenier energy in optimal transport \cite{extremalpointsbenamou}.
A challenging direction of further research is the study of the extremal points of balls associated with other classes of regularizers. For example, it would be of great interest to be able to treat the case of the Radon norm for general vector-valued differential operators. This would lead to the consideration, as an instance among others, of $TV^2$ regularization (see for example \cite{chambollelions})  which is defined as
\begin{equation*}
TV^2(u) = \sup\left\{\int_{\Omega} \nabla u \cdot \div \varphi \, dx : \varphi \in C^1_c(\Omega, \R^{d \times d}),\ \|\varphi\|_\infty \leq 1\right\}\,, 
\end{equation*}
that is the total variation of the weak gradient of an $L^1$ function. As a consequence, it would be possible to compare the regularizing effect of the $TV^2$ seminorm and the $TV$ seminorm, leading, e.g., to a better understanding of how higher-order regularizers reduce the staircase effect.

Additionally, one can also consider more complex regularizers that were studied to overcome the limitations of $TV$ and $TV^2$ models. For example, in \cite{totalgeneralizedvariation}, the so called \emph{total generalized variation} was introduced, which is defined in the following way:
\begin{equation*}
TGV_\alpha^k(u) = \sup\left\{\int_{\Omega} u \,\div^k \varphi \, dx : \varphi \in C_c^k(\Omega, \mbox{Sym}^k(\R^d)),\ \|\div^\ell \varphi\|_\infty \leq \alpha_\ell,\ \ell=0,\ldots,k-1\right\}\, ,
\end{equation*}
where $\mbox{Sym}^k(\R^d)$ is the space of symmetric tensors of order $k$ and $\alpha = (\alpha_0,\ldots, \alpha_{k-1})$ are positive parameters. The characterization of extremal points of the ball associated with these particular regularizers is, up to our knowledge, still not known and would lead to a deep understanding of the regularization effects in respective variational models. 

\subsection*{Acknowledgements}

The authors gratefully acknowledge the funding of this work by the
Austrian Science Fund (FWF) within the project P 29192. We also thank Professor Luigi Ambrosio for the useful remarks regarding \cite{Ambrosiocasellesconnected}.

\bibliographystyle{abbrv}
\bibliography{biblio}

\end{document}